\ProcessOptions \RequirePackage{amsmath}
       \def\b{\beta}        \def\g{\gamma}
            \def\om{\omega} 
       \def\ti{\theta}       
                  \def\z{\zeta}
\def\a{\alpha}
\def\z{\zeta}
\DeclareMathOperator{\og}{O}
\newcommand{\DD}{\widehat{\mathcal{D}}}
\newcommand{\vg}{\widehat{v}}
\newcommand{\Vg}{\widehat{V}}
\newcommand{\fg}{\widehat{f}}
\newcommand{\go}{\widehat{g}}
\newcommand{\h}{\mathcal{H}}
\newcommand{\hti}{\widetilde{\mathcal{H}}}
\newcommand{\hg}{\mathcal{H}_{g}}
\def\D{{\mathbb D}}  
  \def\N{{\mathbb N}}
\def\B{{\mathcal B}}
\def\Dv{{\mathcal D_{v}}}
\def\({\left(}       \def\){\right)}
\newcounter{capa}
\newtheorem{theorem}{Theorem}
\newtheorem{lemma}[theorem]{Lemma}
\newtheorem{proposition}[theorem]{Proposition}
\newtheorem{corollary}[theorem]{Corollary}
\newtheorem{lettertheorem}{Theorem}
\theoremstyle{definition}
\theoremstyle{remark}
\theoremstyle{remarks}
\numberwithin{equation}{section}
\begin{document}
\title
[Schatten classes  ] {Schatten classes of generalized Hilbert operators }

\keywords{Dirichlet type  spaces, Schatten classes, generalized
Hilbert operators, Muckenhoupt weight, doubling weights }

\author[J. A. Pel\'aez]{Jos\'e \'Angel Pel\'aez}
\address{Jos\'e \'Angel Pel\'aez\\ Departamento de An\'alisis
Matem\'atico\\
Universidad de M\'alaga\\ Campus de Teatinos\\ 29071 M\'alaga\\
Spain}\email{japelaez@uma.es}

\author[D. Seco]{Daniel Seco}
\address{Departament de Matem\`atica Aplicada i An\`alisi, Facultat de Matem\`atiques, Universitat de Barcelona, Gran Via 585, 08007 Barcelona, Spain.}
 \email{dseco@mat.uab.cat}
\thanks{
The research leading to these results has received partial funding
from the Ram\'on y Cajal program of MICINN (Spain); from Ministerio
de Econom\'ia y Competitividad, Spain, projects MTM2011-24606,
MTM2011-25502 and MTM2014-52865-P; from La Junta de Andaluc{\'i}a,
(FQM210) and (P09-FQM-4468); and from the European Research Council
under the European Union's Seventh Framework Programme
(FP/2007-2013) / ERC Grant Agreement n.291497.}

\date{\today}

\keywords{Schatten class, Hilbert operator, Dirichlet spaces.}
\subjclass[2010]{Primary: 47B10 Secondary: 31C25, 47G10.}
\begin{abstract}
Let $\mathcal{D}_v$ denote the Dirichlet type space in the unit disc
induced by a radial weight $v$  for which $\vg(r)=\int_r^1 v(s)\,ds$
satisfies the doubling property $\int_r^1 v(s)\,ds\le C
\int_{\frac{1+r}{2}}^1 v(s)\,ds.$ In this paper, we characterize the
Schatten classes $S_p(\mathcal{D}_v)$ of the generalized Hilbert
operators
    \begin{equation*}
    \mathcal{H}_g(f)(z)=\int_0^1f(t)g'(tz)\,dt
    \end{equation*}
acting on $\mathcal{D}_v$, where $v$ satisfies the Muckenhoupt-type conditions
$$
\sup_{0<r<1}\left(\int_r^1 \frac{\widehat{v}(s)}{(1-s)^2} \,ds\right)^{1/2}
\left(\int_0^r \frac{1}{\widehat{v}(s)} \,ds\right)^{1/2}<\infty
$$
and
$$\sup_{0<
    r<1}\left(\int_{0}^r \frac{\widehat{v}(s)}{(1-s)^4}\,ds\right)^{\frac{1}{2}}
    \left(\int_{r}^1\frac{(1-s)^2}{\widehat{v}(s)}\,ds\right)^\frac{1}{2}<\infty.
   $$
For $p\ge 1$, it is proved that  $\hg\in S_p(\mathcal{D}_v)$ if and only if
    \begin{equation*}
    \int_0^1 \left((1-r)\int_{-\pi}^\pi |g'(re^{i\theta})|^2\,d\theta\right)^{\frac{p}{2}}\frac{dr}{1-r}
   <\infty.
    \end{equation*}
\end{abstract}
\maketitle
\section{Introduction and main results}
\par Let $\D$ denote the open unit disk of the complex
plane, and let $H(\D)$ be the class
of all analytic functions on $\D$.
 A function $v:\D\to
(0,\infty)$, integrable over $\D$, is called a weight. It is radial if $v(z)=v(|z|)$ for all $z\in\D$.
 The weighted Dirichlet
space
$\Dv$ consists of  $f\in H(\D)$ for
which
    $$
    \|f\|_{\Dv}^2=|f(0)|^2+\int_\D|f'(z)|^2 v(z)\,dA(z)<\infty,
    $$
where $dA(z)=\frac{dx\,dy}{\pi}$ is the normalized Lebesgue area
measure on $\D$. In this work, we will consider Dirichlet type
spaces $\Dv$ induced by weights in the class~$\DD$ of the radial
weights~$v$ for which $\vg(r)=\int_r^1 v(s)\,ds$ satisfy
 $sup_{0<r<1}\frac{\vg(r)}{\vg(\frac{1+r}{2})}<\infty.$
 The standard radial weights $v(z)=(1-|z|)^\a$,\,$\a>-1$ meet this doubling property. We  write $\mathcal{D}_\a$ for the Dirichlet type space induced by the standard weight $(1-|z|)^\a$.
The Hardy space $H^2$ consists of $f\in H(\D)$ for which
    $
    \|f\|_{H^2}=\lim_{r\to1^-}M_2(r,f)<\infty,$\,
where
    $$
    M_2(r,f)=\left (\frac{1}{2\pi }\int_{-\pi}^{\pi}
    |f(re^{i\theta})|^2\,d\theta\right )^{\frac{1}{2}}.
    $$
The classical Littlewood-Paley formula says that
$H^2=\mathcal{D}_1$. We refer the reader to \cite{Duren1970} for
background information on this space. We denote by $A^2_\om$ the
Bergman space induced by a weight $\om$ (see
\cite[Chapter~$1$]{PelRat}). Moreover,  if $\om$ is radial then
  $A^2_\om=\mathcal{D}_{\om^\star}$, where
    $$
    \omega^\star(z)=\int_{|z|}^1 s \log\frac{s}{|z|}\omega(s)\,ds,\quad z\in\D\setminus\{0\}.
    $$
See \cite[Theorem $4.2$]{PelRat} for the details.

\par Every $g\in H(\D)$ induces an operator, that we call \emph{the generalized Hilbert
operator}
$\hg$, defined by
    \begin{equation}\label{H-g}
    \mathcal{H}_g(f)(z)=\int_0^1f(t)g'(tz)\,dt, \quad f\in H(\D).
    \end{equation}

     The sharp condition
 \begin{equation}\label{eq:vg2}
\int_{0}^1\frac{(1-s)^2}{\vg(s)}\,ds<\infty
\end{equation}
 ensures that the integral in \eqref{H-g} defines an analytic function for each $f\in
 \Dv$ (see Lemma~\ref{le:welldef} below).

 The choice
$g(z)=\log\frac{1}{1-z}$ in \eqref{H-g} gives an integral
representation of the classical Hilbert operator $\h$. The Hilbert
operator $\h$ is a model of Hankel operator, and has been the object
of previous studies such as \cite{AlMonSa,Di,DJV}, where the authors
dealt with questions related to the boundedness, the operator norm
and the spectrum of $\h$. This has revealed a natural connection
between $\h$ and other classical objects: the weighted composition
operators, the Szeg\"{o} projection and the Legendre functions of
the first kind. The Hilbert operator is bounded on the classical
Dirichlet type space $\mathcal{D}_\a$ if and only if $\a\in (0,2)$,
as was shown in \cite{D2,GaGiPeSis}. In fact, if $\a\ge 2$ there is
$f\in \mathcal{D}_\a$, $f\ge 0$ on $[0,1)$ such that $\int_0^1
f(t)\,dt=\infty$.

The generalized Hilbert operator $\hg$ was introduced recently in
\cite{GaGiPeSis}, where it is provided, among other results, a
description of the $g\in H(\D)$ such that $\hg$ is bounded, compact
or Hilbert-Schmidt on $\mathcal{D}_\a$,\,$\a\in (0,2)$. In
\cite{PelRathg}, the authors solve the question of when is $\hg$
bounded or compact between weighted Bergman spaces $A^p_\om$ and
$A^q_\om$,\, $1<p,q<\infty$, induced by a large class of radial
weights.
\medskip

The primary aim of this paper is to determine  the membership in
 Schatten ideals $\mathcal{S}_p(\Dv)$ of generalized Hilbert operators $\hg$
acting on Dirichlet type spaces $\Dv$, $v\in\DD$.  This leads us to consider the following
 spaces.
For $0<p<\infty$, the mixed norm space $\B(2,p)$ consists of $g\in
H(\D)$ such that
    \begin{equation*}
    \left\|g\right\|^q_{\B(2, p)}= |g(0)|^p+\int_0^1 M^p_2(r,g')(1-r)^{\frac{p}{2}-1}\,dr<\infty.
    \end{equation*}
Let us observe that $\B(2,2)$ is nothing else but the classical Dirichlet space $\mathcal{D}_0=\mathcal{D}$.
The space
$\B(2,\infty)$ consists of
$g\in H(\D)$ such that
    \begin{equation*}
    \left\|g\right\|_{\B(2,\infty)}=|g(0)|+\sup_{0<r<1} M_2(r,g')(1-r)^{\frac{1}{2}}<\infty.
    \end{equation*}

 A classical result of Hardy and Littlewood  \cite[Chapter~5]{Duren1970}
  asserts that   $\B(2, \infty)$ coincides with the mean Lipschitz space
$\Lambda\left(2,\frac{1}{2}\right)$  of the $g\in H(\D)$ having a
non-tangential limit $g(e^{i\theta})$ almost everywhere and such
that $$ \omega_2(g, t)=\og(t^\frac{1}{2}), \quad t\to 0, $$ where
$$
\omega_2(g, t)=\sup_{0<h\le t}\left(\int_0^{2\pi}
|g(e^{i(\theta+h)})-g(e^{i\theta})|^2 \frac{d\theta}{2\pi}\right)^{1/2}
$$
is the integral modulus of continuity of order $2$.

The corresponding \lq\lq little
oh\rq\rq \, mean Lipschitz space   $b (2, \infty)$, usually denoted by $\lambda\left(2,\frac12\right)$,  consists of
$g\in H(\D)$ such that
$$\lim_{r\to 1^-} M_2(r,g')(1-r)^{1/2}=0.$$

\par The next theorem is the main result of this paper.
\begin{theorem}\label{th:mainhg}
Let $g\in H(\D)$,  $1\le p\le \infty$ and $v\in\DD$ which satisfies the conditions
\begin{equation}\label{eq:minftyDv}
M_1(v)=\sup_{0<r<1}\left(\int_r^1 \frac{\widehat{v}(s)}{(1-s)^2} \,ds\right)^{1/2}
\left(\int_0^r \frac{1}{\widehat{v}(s)} \,ds\right)^{1/2}<\infty,
\end{equation}
and
\begin{equation}\label{eq:h1}
    \begin{split}
    M_2(v)&=\sup_{0<
    r<1}\left(\int_{0}^r \frac{\widehat{v}(s)}{(1-s)^4}\,ds\right)^{\frac{1}{2}}
    \left(\int_{r}^1\frac{(1-s)^2}{\widehat{v}(s)}\,ds\right)^\frac{1}{2}<\infty.
    \end{split}
    \end{equation}
Then, the following conditions are equivalent:
\begin{enumerate}
\item[\rm(i)] $\mathcal{H}_g \in S_p(\Dv)$;
\item [\rm(ii)] $g\in \B(2,p)$.
\end{enumerate}
Moreover, $$\| \hg\|_{ S_p(\Dv)}\asymp \| g-g(0)\|_{ \B(2,p)}.$$
\end{theorem}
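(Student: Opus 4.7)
My plan is to split the proof of Theorem~\ref{th:mainhg} into the Hilbert--Schmidt case $p=2$, where a direct computation on a monomial basis suffices, and the general case, where a discretization of $\D$ reduces the Schatten condition to a sequence sum reassembling the $\B(2,p)$ norm.

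\emph{Hilbert--Schmidt step ($p=2$).} Writing $g(z)=\sum b_n z^n$ and taking as orthonormal basis of $\Dv\ominus\C$ the normalized monomials $e_n=z^n/\|z^n\|_{\Dv}$, the estimate $\|z^n\|^2_{\Dv}\asymp n\,\vg(1-1/n)$ follows from an integration by parts and the doubling of $\vg$. Computing $\hg(z^k)(z)=\sum_{n\ge 1}\tfrac{n b_n}{n+k}z^{n-1}$ produces a Hilbert-matrix-type double series for $\|\hg\|_{S_2}^2$; combining the Muckenhoupt-type hypotheses $M_1(v),M_2(v)<\infty$ with summation by parts should collapse this series to $\sum_n n|b_n|^2\asymp\int_0^1 M_2^2(r,g')\,dr=\|g-g(0)\|_{\B(2,2)}^2$.

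\emph{General $p$ via discretization.} I would then apply a Luecking-type reduction: fix a pseudohyperbolic lattice $\{z_j\}\subset\D$ and show that $\hg\in S_p(\Dv)$ if and only if $\bigl(\|\hg\widehat{k}_{z_j}\|_{\Dv}\bigr)_j\in\ell^p$, where $\widehat{k}_w$ is the normalized reproducing kernel of $\Dv$. The conditions \eqref{eq:minftyDv}--\eqref{eq:h1} are precisely the weighted Hardy inequalities needed to ensure boundedness of the Bergman projection attached to $v$ on $L^2(v\,dA)$ and to give $\widehat{k}_w$ the expected pointwise size, which is what allows the Luecking scheme to be deployed in this weighted setting. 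Testing $\hg$ on reproducing kernels then yields $\|\hg\widehat{k}_w\|_{\Dv}\asymp M_2(|w|,g')(1-|w|)^{1/2}$, and the standard comparison between dyadic $\ell^p$ sums and the integral $\int_0^1 (M_2(r,g')(1-r)^{1/2})^p\,\tfrac{dr}{1-r}$ identifies the sequence norm with $\|g-g(0)\|_{\B(2,p)}$.

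\emph{Expected obstacle and endpoints.} One direction is comparatively soft: the implication $\hg\in S_p\Rightarrow g\in\B(2,p)$ for $p\ge 2$ follows by testing on reproducing kernels together with the Hilbert--Schmidt computation above. The reverse implication, and particularly the full range $1\le p<2$, is the main obstacle, since it requires upgrading the Luecking reduction to a two-sided control of the singular values of $\hg$ by local averages of $M_2(r,g')(1-r)^{1/2}$; this typically demands an atomic decomposition of $\Dv$ or a decomposition of $\hg$ into rank-one summands adapted to the lattice, and is where both Muckenhoupt-type hypotheses are used decisively. The case $p=\infty$ is just the boundedness statement for $\hg$, which should follow from the methods of \cite{GaGiPeSis,PelRathg} extended to $v\in\DD$, and interpolation between the finite and infinite endpoints closes the argument.
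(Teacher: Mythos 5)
Your endpoint cases agree in spirit with the paper: the Hilbert--Schmidt case is indeed done there by a direct computation on the normalized monomial basis (Proposition~\ref{p=2}), and $p=\infty$ is the boundedness characterization (Proposition~\ref{p=infty}), proved via the coefficient formula for $\|\hg(f)\|_{\Dv}$ together with the boundedness of the sublinear Hilbert operator, which is exactly where \eqref{eq:minftyDv} and \eqref{eq:h1} enter. But the core of your plan for general $p$ --- the Luecking-type reduction $\hg\in S_p(\Dv)\Leftrightarrow\bigl(\|\hg\widehat{k}_{z_j}\|_{\Dv}\bigr)_j\in\ell^p$ over a pseudohyperbolic lattice --- is not a theorem you can invoke: that equivalence is specific to positive Toeplitz-type operators, where the Berezin transform controls the symbol. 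For $\hg$, which is not positive and whose action depends only on $f|_{[0,1)}$ (so that $\hg\widehat{k}_{z_j}$ is negligible for lattice points away from the positive radius, collapsing your lattice to a radial dyadic sequence), neither direction of the equivalence is automatic. Even the direction you call ``soft'' needs care: normalized reproducing kernels at separated points are not an orthonormal set, so you cannot directly apply the trace-class test $\sum_n|\langle Te_n,\sigma_n\rangle|^p\le\|T\|_{S_p}^p$ without an almost-orthogonality or Gram--Schmidt step. And for the sufficiency in the range $1\le p<2$ you explicitly defer to ``an atomic decomposition or a rank-one decomposition adapted to the lattice'' without supplying one; interpolation between $p=2$ and $p=\infty$ cannot reach below $p=2$, so this range is left without an argument. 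These are genuine gaps, not expository shortcuts.

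For comparison, the paper never leaves the coefficient picture. The necessity $(\mathrm{i})\Rightarrow(\mathrm{ii})$ for all $1\le p<\infty$ (Proposition~\ref{pr:necpge1}) is obtained from \cite[Theorem~1.28]{Zhu} applied to two genuinely orthonormal families built from dyadic blocks of monomials, one fixed and one adapted to the Taylor coefficients of $g$; this replaces your kernel testing and works uniformly in $p$. The sufficiency for $2<p<\infty$ (Proposition~\ref{p>2suf}) is complex interpolation between the $\B(2,2)\to S_2$ and $\B(2,\infty)\to S_\infty$ endpoints, using Theorem~\ref{th:OFinterpolation} for the mixed-norm scale and \cite[Theorem~2.6]{Zhu} for the Schatten scale --- close to what you sketch at the very end. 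The sufficiency for $0<p<2$ (Proposition~\ref{pr:sufp<2}) is the replacement for your missing atomic decomposition: one bounds $\|\hg\|_{S_p}^p$ by $\sum_n\langle\hg e_n,\hg e_n\rangle^{p/2}$ for the dyadic-block basis (valid for $p\le2$ by \cite[Corollary~1.32]{Zhu}) and then controls the resulting double sums by two weight lemmas (Lemmas~\ref{le:tech} and~\ref{le:tech2}) that encode \eqref{eq:minftyDv} and \eqref{eq:h1} as geometric-decay estimates for the moments $v_{2^{n+1}}$. If you want to salvage your outline, the concrete missing ingredient is precisely such a quantitative summability statement for the weight along the dyadic radii, together with an orthonormal (not merely separated) family on which to test.
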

It will also be proved (see Proposition~\ref{compactness} below)
that $\hg$ is compact on $\Dv$ if and only if $g\in b (2,\infty)$.
Notice that the conditions that characterize the Schatten classes do
not depend on the weight defining the space.

If $v(z)=(1-|z|)^\a$ is a standard weight, \eqref{eq:minftyDv} holds
if and only if $\a>0$, and \eqref{eq:h1} is satisfied if and only if
$\a<2$, so both conditions hold simultaneously if and only if $\a\in
(0,2)$. Therefore, in particular, Theorem~\ref{th:mainhg} provides a
characterization of Schatten classes of generalized Hilbert
operators $\hg$ acting on the Hardy space $H^2$ ($\a=1$) and
standard Bergman spaces $A^2_\beta$, $\beta\in (-1,0)$.
 Going further the next result  follows from Theorem~\ref{th:mainhg}.

\begin{corollary}\label{co:hgbergman}
Let $g\in H(\D)$,  $1\le p\le \infty$ and  $\om\in\DD$ which satisfies the condition
\begin{equation}\label{M2condition}
    \sup_{0<r<1}
    \bigg(\int_{0}^r\frac{\widehat{\om}(t)}
    {(1-t)^{2}}\,dt\bigg)^\frac{1}{2} \bigg(\int_{r}^1\frac{1}{\widehat{\om}(t)}\,dt\bigg)^\frac{1}{2}<\infty.
    \end{equation}
Then, the following conditions are equivalent:
\begin{enumerate}
\item[\rm(i)] $\mathcal{H}_g \in S_p(A^2_\om)$;
\item [\rm(ii)] $g\in \B(2,p)$.
\end{enumerate}
\end{corollary}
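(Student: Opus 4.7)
The strategy is to reduce the statement to Theorem \ref{th:mainhg} by means of the identification $A^2_\om=\mathcal{D}_{\om^\star}$ coming from \cite[Theorem 4.2]{PelRat}. Thus it suffices to show that $v=\om^\star$ belongs to $\DD$ and satisfies the two Muckenhoupt-type conditions \eqref{eq:minftyDv} and \eqref{eq:h1}; the final estimate $\|\hg\|_{S_p(A^2_\om)}\asymp \|g-g(0)\|_{\B(2,p)}$ is then inherited from the theorem.

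The technical core is a two-sided estimate of $\widehat{\om^\star}$ in terms of $\widehat{\om}$. Starting from $\om^\star(r)=\int_r^1 s\log(s/r)\om(s)\,ds$, the elementary bounds $(s-r)/s\le \log(s/r)\le (s-r)/r$ give $\om^\star(r)\asymp \int_r^1 (s-r)\om(s)\,ds$ on $[1/2,1)$. Integration by parts and the doubling property of $\om\in\DD$ (which ensures $\widehat{\om}(s)\asymp\widehat{\om}(r)$ whenever $1-s\asymp 1-r$) then yield
$$
\om^\star(r)\asymp (1-r)\,\widehat{\om}(r),\qquad \widehat{\om^\star}(r)\asymp (1-r)^2\,\widehat{\om}(r),\qquad r\to 1^-.
$$
As a byproduct, the doubling of $\widehat{\om}$ transfers to $\widehat{\om^\star}$, so $\om^\star\in\DD$.

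With this estimate in hand the two conditions on $v=\om^\star$ are translated into conditions on $\om$. For \eqref{eq:h1}, the factors $(1-s)^2$ cancel explicitly:
$$
\int_0^r\!\frac{\widehat{\om^\star}(s)}{(1-s)^4}\,ds \asymp\int_0^r\!\frac{\widehat{\om}(s)}{(1-s)^2}\,ds,\qquad
\int_r^1\!\frac{(1-s)^2}{\widehat{\om^\star}(s)}\,ds \asymp\int_r^1\!\frac{ds}{\widehat{\om}(s)},
$$
so \eqref{eq:h1} for $v=\om^\star$ is equivalent to the hypothesis \eqref{M2condition}. For \eqref{eq:minftyDv}, I plan to show it is \emph{automatic} whenever $\om\in\DD$: the first factor is, by doubling, comparable to $(1-r)\widehat{\om}(r)$, while for the second factor the monotonicity of $\widehat{\om}$ gives
$$
\int_0^r\frac{ds}{\widehat{\om^\star}(s)}\asymp \int_0^r\frac{ds}{(1-s)^2\widehat{\om}(s)}\le \frac{1}{\widehat{\om}(r)}\int_0^r\frac{ds}{(1-s)^2}\asymp \frac{1}{(1-r)\widehat{\om}(r)},
$$
and the product is uniformly bounded in $r$.

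The main obstacle is the sharp two-sided estimate of $\om^\star$; once this is established, the three reductions above are essentially substitutions together with the doubling property, and Theorem \ref{th:mainhg} applied to $v=\om^\star$ delivers the equivalence (i)$\Leftrightarrow$(ii).
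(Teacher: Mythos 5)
Your proposal is correct and follows essentially the same route as the paper: the paper also reduces to Theorem~\ref{th:mainhg} via $A^2_\om=\mathcal{D}_{\om^\star}$, works with the comparable weight $v(|z|)=(1-|z|)\widehat{\om}(|z|)$ (so that $\widehat{v}(r)\asymp(1-r)^2\widehat{\om}(r)$, which is exactly your two-sided estimate of $\widehat{\om^\star}$, already available as Lemma~\ref{Lemma:weights-in-D-hat}(viii)), and then checks that \eqref{eq:h1} becomes \eqref{M2condition} while \eqref{eq:minftyDv} holds automatically by the monotonicity of $\widehat{\om}$.
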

The Muckenhoupt-type condition \eqref{M2condition} arises in the
study of generalized Hilbert operators $\hg$ on weighted Bergman
spaces in~\cite{PelRathg}, where the authors describe the $g\in
H(\D)$ such that $\hg$ is bounded, compact or Hilbert-Schmidt on
$A^2_\om$ for the the subclass of $\DD$ consisting of regular
weights.

\par From now on, for each radial weight $v$ and $x\in\mathbb{R}$ let us denote $\Vg_{x}(r)=\frac{\vg(r)}{(1-r)^x}$.
Our approach to prove the case $p=\infty$ of Theorem~\ref{th:mainhg} reveals the  role of  Muckenhoupt type conditions
\eqref{eq:minftyDv} and \eqref{eq:h1}. On one hand,  \eqref{eq:minftyDv}   allows to prove that $L^2_{\Vg_2}$ is a   natural restriction of $\Dv$
to functions defined on $[0, 1)$. On the other hand,    the sublinear Hilbert
operator  defined by
    $$
    \hti(f)(z)=\int_0^1\frac{|f(t)|}{1-tz}\,dt$$
 behaves like a kind of maximal function for all generalized
Hilbert operators $\hg$ such that $g\in \B(2,\infty)$, and hence, it
will be essential to study its boundedness on $L^2_{\Vg_2}$.

\begin{theorem}\label{th:hilbertop}
Let  $v\in\DD$ which satisfies  the conditions \eqref{eq:vg2} and
\eqref{eq:minftyDv}. Then the following assertions are equivalent:
    \begin{enumerate}
    \item[\rm(i)] $\h:\,L^2_{\Vg_{2}}\to \Dv$ is bounded;
    \item[\rm(ii)] $\hti:\,L^2_{\Vg_{2}}\to \Dv$ is bounded;
    \item[\rm(iii)] $v$ satisfies the Muckenhoupt type condition \eqref{eq:h1}.
\end{enumerate}
Moreover,
 if $M_2(v)<\infty$, then
    $$
 \frac{M_2(v)}{M_1(v)}\lesssim   \|\h\|_{\left(L^2_{\Vg_{2}},\Dv\right)}\asymp
    \|\hti\|_{\left(L^2_{\Vg_{2}},\Dv\right)}\lesssim M_1(v)M_2(v).
    $$
\end{theorem}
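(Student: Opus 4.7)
The approach is to establish (i)$\Leftrightarrow$(ii) by an essentially formal argument, then show (iii)$\Leftrightarrow$(ii) via weighted Hardy--Muckenhoupt inequalities and a test-function construction. The equivalence (i)$\Leftrightarrow$(ii) is short: since $\hti f=\h(|f|)$ for any $f\in L^2_{\Vg_{2}}$, any bound on $\h$ gives one on $\hti$ with the same constant; conversely, writing a complex $f$ as a combination of four nonnegative functions (the positive and negative parts of its real and imaginary parts) and using that $\hti g=\h g$ whenever $g\ge 0$, one recovers $\h f$ from $\hti$ applied to those parts, giving $\|\h\|\asymp\|\hti\|$.

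For (iii)$\Rightarrow$(ii), I would differentiate under the integral to obtain $(\hti f)'(z)=\int_0^1 t|f(t)|(1-tz)^{-2}\,dt$; the identity $\|F\|_{\Dv}^2\asymp |F(0)|^2+\int_0^1 M_2^2(r,F')\,rv(r)\,dr$, valid for radial $v$, combined with Minkowski's integral inequality and the standard bound $M_2(r,(1-t\cdot)^{-2})\lesssim (1-tr)^{-3/2}$, yields
$$\|\hti f\|_{\Dv}^2\lesssim \int_0^1\Bigl(\int_0^1\frac{t|f(t)|}{(1-tr)^{3/2}}\,dt\Bigr)^2 rv(r)\,dr.$$
Since $1-tr\asymp 1-t$ for $t<r$ and $1-tr\asymp 1-r$ for $t>r$, splitting the inner integral at $t=r$ reduces the matter to two weighted Hardy inequalities of Muckenhoupt--Bradley type on $(0,1)$. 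Integration by parts, justified by the doubling property $v\in\DD$, converts integrals of $v$ against $(1-s)^{-k}$ into integrals of $\vg$ against $(1-s)^{-k-1}$, and the two resulting Muckenhoupt constants turn out to equal $M_1(v)$ and $M_2(v)$; their product appears in the final bound because the two sub-estimates couple through the parameter $r$. Conversely, for (i)$\Rightarrow$(iii), I would test $\h$ against the one-parameter family $f_a(t)=\chi_{[a,1)}(t)(1-t)^2/\vg(t)$, for which $\|f_a\|_{L^2_{\Vg_{2}}}^2=\int_a^1 (1-t)^2/\vg(t)\,dt$. Restricting $\h f_a$ to $|z|<a$, where $|1-tz|\asymp 1-t$, one lower-bounds $\int_{|z|<a}|(\h f_a)'|^2 v\,dA$ by a quantity proportional to $\int_0^a \vg(\rho)/(1-\rho)^4\,d\rho$ times $\|f_a\|_{L^2_{\Vg_{2}}}^4$; taking the supremum in $a$ in $\|\h f_a\|_{\Dv}\le\|\h\|\|f_a\|_{L^2_{\Vg_{2}}}$ yields \eqref{eq:h1}, with a loss of $1/M_1(v)$ coming from the need to invoke \eqref{eq:minftyDv} to discard contributions from the annulus $\{|z|>a\}$.

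The main obstacle is the (iii)$\Rightarrow$(ii) direction: matching the Muckenhoupt-type conditions \eqref{eq:minftyDv} and \eqref{eq:h1}, which involve $\vg$ against $(1-s)^{-2}$ and $(1-s)^{-4}$, with the sharp Hardy constants for the two split integrals that arise after Minkowski. The integrals produced by that splitting naturally carry $v$ rather than $\vg$, and moving between them without losing constants relies crucially on the doubling condition $v\in\DD$ together with careful integration by parts.
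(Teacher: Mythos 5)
Your treatment of (i)$\Leftrightarrow$(ii) and of (iii)$\Rightarrow$(ii) is essentially the paper's argument: Minkowski's inequality gives $M_2(r,(\hti\phi)')\lesssim\int_0^1|\phi(t)|(1-tr)^{-3/2}\,dt$, the split at $t=r$ produces two weighted Hardy inequalities treated by Muckenhoupt's theorems, and the two resulting Muckenhoupt constants are then dominated (not equal, but controlled via the doubling property and a Fubini computation as in Lemma~\ref{le:weight1}) by $M_1(v)$ and $M_2(v)$. That part is sound as a sketch.

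The genuine gap is in (i)$\Rightarrow$(iii). You use the same test functions $f_a=\chi_{[a,1)}(1-t)^2/\vg(t)$ as the paper, but your mechanism for extracting \eqref{eq:h1} does not work as written. First, for $|z|<a\le t$ one has $|1-tz|\asymp 1-|z|$, not $1-t$ (the larger of the two gaps wins, and here $1-|z|\ge 1-t$). Second, and more seriously, the claimed lower bound $\int_{|z|<a}|(\h f_a)'|^2v\,dA\gtrsim\bigl(\int_a^1 f_a\bigr)^2\int_0^a\vg(\rho)(1-\rho)^{-4}\,d\rho$ is not justified: the pointwise estimate $|(\h f_a)'(z)|\gtrsim(1-|z|)^{-2}\int_a^1 tf_a(t)\,dt$ only holds in a Stolz-type region $|\arg z|\lesssim 1-|z|$, and the angular localization costs a factor $(1-\rho)$, leaving $\int_0^a v(\rho)(1-\rho)^{-3}\,d\rho$ rather than $\int_0^a\vg(\rho)(1-\rho)^{-4}\,d\rho$. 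The inequality $\int_0^a v(\rho)(1-\rho)^{-3}\,d\rho\gtrsim\int_0^a\vg(\rho)(1-\rho)^{-4}\,d\rho$ is precisely the direction \emph{not} provided by Lemma~\ref{le:weight1} (the Fubini identity leaves the boundary term $\vg(a)(1-a)^{-3}$ on the wrong side), so you would need an additional argument. The paper sidesteps all of this by working on the radius: Lemma~\ref{le:minftyDv} (the Hardy--Littlewood-type embedding $\int_0^1 M_\infty^2(s,f)\Vg_2(s)\,ds\lesssim M_1^2(v)\|f\|_{\Dv}^2$, which is exactly where \eqref{eq:minftyDv} and the factor $M_1(v)$ in the lower estimate enter) converts the hypothesis into $\|\h\phi_a\|_{L^2_{\Vg_2}}\lesssim M_1(v)\|\h\|\,\|\phi_a\|_{L^2_{\Vg_2}}$, and the elementary bound $1-ts\le 2(1-s)$ for $s\le a\le t$ then produces $\int_0^a\Vg_4(s)\,ds$ with the weight $\vg$ already in place. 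Relatedly, your explanation that the loss of $1/M_1(v)$ comes from ``discarding the annulus $\{|z|>a\}$'' is off target: a lower bound never requires discarding anything, and the $M_1$ factor actually arises from the radial embedding just described.
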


We obtain as a byproduct the following result which extends several
results in the literature \cite{D2,PelRathg}.

\begin{corollary}\label{co:hilbertop}
Let  $v\in\DD$ which satisfies  the conditions \eqref{eq:minftyDv}
and \eqref{eq:h1}. Then, both the Hilbert operator~$\h$ and the
sublinear Hilbert operator $\hti$ are bounded on $\Dv$.
\end{corollary}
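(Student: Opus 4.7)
The plan is to deduce Corollary~\ref{co:hilbertop} from Theorem~\ref{th:hilbertop} by factoring both $\h$ and $\hti$ through the intermediate space $L^2_{\Vg_{2}}$. Since $\h(f)$ and $\hti(f)$ depend only on the values $f(t)$ for $t\in[0,1)$, it is enough to prove that the restriction map $R\colon \Dv\to L^2_{\Vg_{2}}$ defined by $Rf(r)=f(r)$ is bounded. The discussion preceding Theorem~\ref{th:hilbertop} in the paper itself signals that \eqref{eq:minftyDv} is precisely the condition which makes $L^2_{\Vg_{2}}$ a natural restriction of $\Dv$, so establishing the embedding is essentially a one-dimensional Hardy-type matter.

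To prove boundedness of $R$, I would use the identity $f(r)=f(0)+\int_0^r f'(t)\,dt$ and invoke Muckenhoupt's classical two-weight Hardy inequality. The boundedness of the Hardy operator $h\mapsto \int_0^r h(t)\,dt$ from $L^2([0,1),\vg(t)\,dt)$ into $L^2([0,1),\vg(r)(1-r)^{-2}\,dr)$ is equivalent to the finiteness of the two-weight supremum $M_1(v)$ appearing in \eqref{eq:minftyDv}, and thus yields
$$
\int_0^1 |f(r)|^2\frac{\vg(r)}{(1-r)^2}\,dr \lesssim |f(0)|^2 + \int_0^1 |f'(t)|^2\vg(t)\,dt.
$$
It then remains to control the one-dimensional integral on the right by the area integral $\int_\D |f'(w)|^2 v(w)\,dA(w)=\|f\|_{\Dv}^2-|f(0)|^2$. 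To do this, I would apply the sub-mean value property of the subharmonic function $|f'|^2$ on the Whitney disks $\{w:|w-t|<(1-t)/2\}$, then switch the order of integration and exploit the doubling of $\vg$ (inherited from $v\in\DD$) to conclude that the resulting one-dimensional kernel $\int_{\{t:|w-t|<(1-t)/2\}}\vg(t)/(1-t)^2\,dt$ is controlled by $v(|w|)$ in an averaged sense on the relevant almost-annular regions.

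Once $R$ has been shown to be bounded, the corollary is immediate: $\h=\h\circ R$ and $\hti=\hti\circ R$ as maps $\Dv\to\Dv$, and the second factor is bounded by Theorem~\ref{th:hilbertop}, since both \eqref{eq:minftyDv} and \eqref{eq:h1} are in force (note that \eqref{eq:vg2}, also a hypothesis of Theorem~\ref{th:hilbertop}, follows from \eqref{eq:h1} by testing the supremum at, e.g., $r=1/2$). I expect the main obstacle to be the last step of the embedding, namely passing from the one-dimensional integral $\int_0^1|f'(t)|^2\vg(t)\,dt$ to the area integral, since this requires a careful interplay between subharmonicity and the $\DD$-doubling of $\vg$; all other parts of the proof consist of a direct appeal to Muckenhoupt's theorem or trivial composition.
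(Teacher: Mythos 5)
Your overall architecture is exactly the paper's: the authors also obtain the corollary by composing Theorem~\ref{th:hilbertop} with a restriction embedding $\Dv\hookrightarrow L^2_{\Vg_2}$, namely Lemma~\ref{le:minftyDv}, and the first half of your embedding argument (the two-weight Hardy/Muckenhoupt step, whose $A_2$-type condition is precisely $M_1(v)<\infty$) coincides with theirs. Your remark that \eqref{eq:vg2} follows from \eqref{eq:h1} is also correct.

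The gap is in the last step of the embedding, and you have correctly located it as the delicate point, but the fix you sketch does not work. After one application of the sub-mean value property on Whitney disks and Fubini, the kernel you obtain is
\begin{equation*}
K(w)=\int_{\{t\in[0,1):\,|w-t|<(1-t)/2\}}\frac{\vg(t)}{(1-t)^{2}}\,dt\asymp\frac{\vg(|w|)}{1-|w|}
\end{equation*}
on a tent around the radius, and it multiplies $|f'(w)|^{2}$ \emph{pointwise}; there is no further averaging available. To conclude you would need $\vg(r)\lesssim(1-r)v(r)$, which characterizes the strictly smaller class of regular weights and fails for general $v\in\DD$ (the upper doubling of $\vg$ puts no pointwise lower bound on $v$, which may even vanish on annuli where $\vg$ stays comparable to its values nearby). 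The inequality you need, $\int_0^1|f'(t)|^{2}\vg(t)\,dt\lesssim\|f'\|^{2}_{A^2_v}$, is nevertheless true for every $v\in\DD$: the paper gets it (in the stronger form with $M_\infty(t,f')$ in place of $|f'(t)|$, which is also what it feeds into the Muckenhoupt step) from the Hardy--Littlewood inequality $\int_0^1 M_\infty^{2}(r,h)\,dr\le C\|h\|^{2}_{H^2}$ applied to dilations $h_\rho$ and integrated against $v$, yielding \eqref{eq:minftyA2}; alternatively one can invoke the Carleson-measure characterization for $A^2_v$, $v\in\DD$, since $\int_{1-\delta}^1\vg(t)\,dt\le\delta\,\vg(1-\delta)$ because $\vg$ is decreasing. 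Either of these replaces your subharmonicity step; as written, that step would fail outside the regular-weight subclass.
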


Throughout the paper   the letter $C=C(\cdot)$ will denote an
absolute constant whose value depends on the parameters indicated
in the parenthesis, and may change from one occurrence to another.
We will use the notation $a\lesssim b$ if there exists a constant
$C=C(\cdot)>0$ such that $a\le Cb$, and $a\gtrsim b$ is understood
in an analogous manner. In particular, if $a\lesssim b$ and
$a\gtrsim b$, then we will write $a\asymp b$.

\section{The Hilbert operator on $\Dv$}
\subsection{Some results on weights}
The following lemma provides useful characterizations of weights in
$\DD$. For a proof, see \cite{PelSum14}. Given a radial weight $v$,
we  write $v_x=\int_0^1 s^x v(s)\,ds$, $x>-1$.

\begin{lemma}\label{Lemma:weights-in-D-hat}
Let $\om$ be a radial weight. Then the following conditions are equivalent:
\begin{itemize}
\item[\rm(i)] $\om\in\DD$;
\item[\rm(ii)] There exist $C=C(\om)\ge 1$ and $\b=\b(\om)>0$ such that
    \begin{equation*}
    \begin{split}
    \widehat{\om}(r)\le C\left(\frac{1-r}{1-t}\right)^{\b}\widehat{\om}(t),\quad 0\le r\le t<1;
    \end{split}
    \end{equation*}
\item[\rm(iii)] There exist $C=C(\om)>0$ and $\gamma=\gamma(\om)>0$ such that
    \begin{equation*}
    \begin{split}
    \int_0^t\left(\frac{1-t}{1-s}\right)^\g\,\om(s)\,ds
    \le C\widehat{\om}(t),\quad 0\le t<1;
    \end{split}
    \end{equation*}
\item[\rm(iv)] There exist $C=C(\om)>0$ and $\eta=\eta(\om)>0$ such that
    \begin{equation*}
    \begin{split}
    \om_x\le C\left(\frac{y}{x}\right)^{\eta}\om_y,\quad 0<x\le y<\infty;
    \end{split}
    \end{equation*}
    \item[\rm(v)] There exists $C=C(\om)>0$ such that $\om_{n}\le C\om_{2n}$ for all $n\in\N$;

\item[\rm(vi)]
    \begin{equation*}
    \om_x \asymp\widehat{\om}\left(1-\frac1x\right),\quad
    x\in[1,\infty);
    \end{equation*}
\item[\rm(vii)] There exists $\lambda=\lambda(\om)\ge0$ such that
    $$
    \int_\D\frac{\om(z)\,dA(z)}{|1-\overline{\z}z|^{\lambda+1}}\asymp\frac{\widehat{\om}(\zeta)}{(1-|\z|)^\lambda},\quad \z\in\D;
    $$
    \item[\rm(viii)] $\om^\star(z)\asymp\widehat{\om}(z)(1-|z|)$ for $|z|\ge \frac12$.
\end{itemize}
\end{lemma}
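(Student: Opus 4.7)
\emph{Plan.} The lemma bundles eight equivalences, so the strategy is to prove them in three thematic groups and stitch the groups together via the moment identity (vi). The groups are (i)--(iii) (regularity of $\widehat{\om}$), (iv)--(vi) (moments versus tail integrals), and (vii)--(viii) (integral representations against analytic kernels).

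For the first block I would show (i) $\Rightarrow$ (ii) by iteration of the dyadic doubling: given $0\le r\le t<1$, pick the $n$ with $(1-r)/2^{n+1}\le 1-t\le (1-r)/2^n$ and apply (i) $n$ times to obtain $\widehat{\om}(r)\le C^n\widehat{\om}(t)$, which is (ii) with $\beta=\log_2 C$. For (ii) $\Rightarrow$ (iii), I would decompose $[0,t]$ into dyadic rings $I_k=[1-2^{k+1}(1-t),\,1-2^k(1-t)]$, bound $(1-t)/(1-s)$ by $2^{-k}$ on $I_k$, and use (ii) to dominate $\widehat{\om}$ at the inner endpoint of $I_k$ by $2^{(k+1)\beta}\widehat{\om}(t)$; the resulting geometric series converges provided $\gamma>\beta$. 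Finally (iii) $\Rightarrow$ (i) comes from splitting $\widehat{\om}(r)=\int_r^{(1+r)/2}\om(s)\,ds+\widehat{\om}((1+r)/2)$ and applying (iii) at $t=(1+r)/2$ after noting that $((1-t)/(1-s))^\gamma\ge 2^{-\gamma}$ on $[r,(1+r)/2]$.

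For the moment block, the workhorse is the identity $\om_x=x\int_0^1 s^{x-1}\widehat{\om}(s)\,ds$ obtained by integration by parts. The lower bound $\om_x\ge (1-1/x)^x\widehat{\om}(1-1/x)\gtrsim\widehat{\om}(1-1/x)$ is free. For the upper bound I would split the integral at $1-1/x$: the piece near $1$ is trivially $\lesssim\widehat{\om}(1-1/x)$, while on $[0,1-1/x]$ I insert the (ii)-bound $\widehat{\om}(s)\lesssim(x(1-s))^\beta\widehat{\om}(1-1/x)$ and recognize the remaining integral as a Beta function $B(x,\beta+1)\asymp x^{-(\beta+1)}$, whose Stirling asymptotic exactly cancels the $x^{\beta+1}$ prefactor. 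This gives (i) $\Rightarrow$ (vi). The chain (vi) $\Rightarrow$ (iv) $\Rightarrow$ (v) is immediate, and (v) $\Rightarrow$ (i) follows by specializing (vi) at $x=n,2n$ since $1-1/(2n)=(1+(1-1/n))/2$ realizes the dyadic doubling.

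For (vii), I would expand $(1-\overline{\z}z)^{-(\lambda+1)}=\sum_n\binom{n+\lambda}{n}(\overline{\z}z)^n$, apply Fubini, and match the resulting series $\sum_n n^\lambda|\z|^{2n}\om_{2n+1}$ with $\widehat{\om}(\z)(1-|\z|)^{-\lambda}$ using (vi) together with $\binom{n+\lambda}{n}\asymp n^\lambda$. For (viii), the upper bound is immediate from $\log(s/|z|)\le\log(1/|z|)\asymp 1-|z|$ on $[|z|,1]$ for $|z|\ge 1/2$, and the matching lower bound comes from $\log(s/|z|)\ge(s-|z|)/s$ restricted to $s\in[(1+|z|)/2,1]$, which yields $(1-|z|)\widehat{\om}((1+|z|)/2)$, combined with (i) to replace $\widehat{\om}((1+|z|)/2)$ by $\widehat{\om}(|z|)$. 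The main obstacle I anticipate is the Beta-integral cancellation in the moment block: one must carry the exponent $\beta$ from the first block through cleanly and verify that $B(x,\beta+1)\asymp\Gamma(\beta+1)x^{-(\beta+1)}$ uniformly for $x\ge 1$. Once that step is done, everything else reduces to dyadic decompositions, a power-series identity, or direct substitution.
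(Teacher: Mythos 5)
The paper does not actually prove this lemma; it cites \cite{PelSum14} for it, so there is no internal argument to compare with and your proposal has to stand on its own. Your first block is sound: the cycle (i)$\Rightarrow$(ii)$\Rightarrow$(iii)$\Rightarrow$(i) by iteration, dyadic rings and restriction of the integral is exactly right, and so is (i)$\Rightarrow$(vi) via $\om_x=x\int_0^1s^{x-1}\widehat{\om}(s)\,ds$, the bound $\widehat{\om}(s)\le C(x(1-s))^\beta\widehat{\om}(1-1/x)$ and the Beta integral. The genuine gap is in closing the moment block. First, (vi)$\Rightarrow$(iv) is not immediate: from $\om_x\asymp\widehat{\om}(1-1/x)$ and the monotonicity of $\widehat{\om}$ you only get the trivial inequality $\om_x\gtrsim\om_y$ for $x\le y$; the required estimate $\om_x\le C(y/x)^\eta\om_y$ translates into $\widehat{\om}(1-1/x)\lesssim(y/x)^\eta\widehat{\om}(1-1/y)$, which is condition (ii) at the points $1-1/x$, i.e.\ precisely the doubling you are trying to recover. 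Second, your proof of (v)$\Rightarrow$(i) ``by specializing (vi)'' is circular: in your scheme (vi) is only known as a consequence of (i), so it cannot be invoked when (v) is the hypothesis. The net effect is that none of (iv), (v), (vi) is ever shown to imply (i), and this is the hardest implication of the lemma. A correct argument for (v)$\Rightarrow$(i) needs a new idea, for instance: from $\om_n\le C\om_{2n}$ and $\om_n-\om_{2n}=\int_0^1s^n(1-s^n)\om(s)\,ds\ge(1-t^n)\int_0^ts^n\om(s)\,ds$ one gets $\int_0^ts^n\om(s)\,ds\le\frac{1-1/C}{1-t^n}\,\om_n$, so whenever $t^n\le\frac1{2C}$ a fixed proportion of $\om_n$ is carried by $[t,1]$, whence $\widehat{\om}(t)\ge\int_t^1s^n\om(s)\,ds\ge\frac{1}{2C-1}\om_n\ge\frac{r^n}{2C-1}\widehat{\om}(r)$; taking $t=\frac{1+r}2$ and $n\asymp\frac{\log(2C)}{1-r}$ keeps $r^n$ bounded below and yields (i).

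The third block is also only half done: you prove (i)$\Rightarrow$(vii) and (i)$\Rightarrow$(viii), but the lemma asserts equivalences, so the converses must be supplied and they are not free. For (viii)$\Rightarrow$(i), use $\log\frac{s}{|z|}\le\frac{s-|z|}{|z|}$ and Fubini to get $\om^\star(z)\le\frac1{|z|}\int_{|z|}^1\widehat{\om}(u)\,du$, so the hypothesis gives $\widehat{\om}(r)\le\frac{C}{1-r}\int_r^1\widehat{\om}(u)\,du$, and a Gronwall-type argument on $h(r)=\int_r^1\widehat{\om}(u)\,du$ shows that $h$, and then $\widehat{\om}$, is doubling. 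For (vii), after evaluating the angular integral the condition reduces to $\int_0^1\bigl(\frac{1-|\z|}{1-s|\z|}\bigr)^\lambda\om(s)\,ds\lesssim\widehat{\om}(|\z|)$, whose nontrivial half is exactly (iii); this observation gives both directions at once, whereas your power-series matching uses (vi) and only yields the forward implication. The remaining issues (the off-by-one in the number of iterations in (i)$\Rightarrow$(ii), and the degeneracy of $(1-1/x)^x$ at $x=1$) are cosmetic.
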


\par The following technical lemma will be used in the proof of Theorem~\ref{th:hilbertop}.
\begin{lemma}\label{le:weight1}
Let $v$ be a radial weight.  If \eqref{eq:h1} holds, then
$$\sup_{0\le
    r<1}\left(\int_{0}^r \frac{v(s)}{(1-s)^3}\,ds\right)^{\frac{1}{2}}
    \left(\int_{r}^1\frac{1}{\Vg_2(s)}\,ds\right)^\frac{1}{2}<\infty.$$
\end{lemma}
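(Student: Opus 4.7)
The approach will be to reduce the assertion to the hypothesis \eqref{eq:h1} by a single integration by parts that rewrites $\int_0^r v(s)/(1-s)^3\,ds$ in terms of $\widehat{v}$. Since $\widehat{v}'(s)=-v(s)$ almost everywhere on $[0,1)$, integrating against $(1-s)^{-3}$ produces the identity
\begin{equation*}
\int_0^r\frac{v(s)}{(1-s)^3}\,ds=\widehat{v}(0)-\frac{\widehat{v}(r)}{(1-r)^3}+3\int_0^r\frac{\widehat{v}(s)}{(1-s)^4}\,ds.
\end{equation*}
Discarding the non-positive boundary term at $r$ and recalling that $1/\Vg_2(s)=(1-s)^2/\widehat{v}(s)$, I then multiply through by $\int_r^1(1-s)^2/\widehat{v}(s)\,ds$ and split the resulting estimate into two pieces.

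The first piece, $3\left(\int_0^r \widehat{v}(s)/(1-s)^4\,ds\right)\left(\int_r^1(1-s)^2/\widehat{v}(s)\,ds\right)$, is at most $3M_2(v)^2$ uniformly in $r$ by hypothesis \eqref{eq:h1}. The second piece is $\widehat{v}(0)\int_r^1(1-s)^2/\widehat{v}(s)\,ds$; since the $r$-dependent factor is monotone decreasing in $r$, its supremum over $r\in[0,1)$ equals $\widehat{v}(0)\int_0^1(1-s)^2/\widehat{v}(s)\,ds$.

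It remains to verify that this last quantity is finite, and this is where I expect any mild difficulty, since the boundary contribution $\widehat{v}(0)$ is not directly controlled by \eqref{eq:h1} on its own. I would split the integral at $\tfrac12$: on $[0,\tfrac12]$ the integrand $(1-s)^2/\widehat{v}(s)$ is bounded because $\widehat{v}$ is non-increasing and strictly positive, so $\widehat{v}(s)\ge\widehat{v}(\tfrac12)>0$; on $[\tfrac12,1]$ the finiteness of $\int_{1/2}^1(1-s)^2/\widehat{v}(s)\,ds$ is forced by \eqref{eq:h1} evaluated at $r=\tfrac12$, because the factor $\int_0^{1/2}\widehat{v}(s)/(1-s)^4\,ds$ is strictly positive while the whole product is bounded by $M_2(v)^2$. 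Combining the two pieces yields a uniform bound and completes the plan.
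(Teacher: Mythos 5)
Your proof is correct and follows essentially the same route as the paper: your integration-by-parts identity is just a rearrangement of the Fubini identity the authors use, and both arguments then combine the bound $\bigl(\int_0^r \widehat{v}(s)(1-s)^{-4}ds\bigr)\bigl(\int_r^1(1-s)^2\widehat{v}(s)^{-1}ds\bigr)\le M_2(v)^2$ with the observation that \eqref{eq:h1} forces $\int_0^1(1-s)^2/\widehat{v}(s)\,ds<\infty$ to absorb the leftover constant term. Your explicit justification of that last finiteness (splitting at $\tfrac12$) is a welcome detail the paper states more tersely.
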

\begin{proof}
For $0<r<1$, Fubini's theorem gives
\begin{equation*}
\int_0^r \frac{\vg(s)}{(1-s)^4}\,ds= \int_0^r
\frac{v(s)[1-(1-s)^3]}{3(1-s)^3}\,ds+
\vg(r)\frac{1-(1-r)^3}{3(1-r)^3}.
\end{equation*}

We may assume that $r\in[\frac12, 1)$, and then we have
\begin{equation*}\begin{split}
\int_{0}^r \frac{v(s)}{(1-s)^3}\,ds&\le C\int_0^{\frac12}v(s)\,ds+C\int_{\frac12}^r \frac{v(s)[1-(1-s)^3]}{(1-s)^3}\,ds
\\ & \le C\int_0^{\frac12}v(s)\,ds+C\int_0^r \frac{\vg(s)}{(1-s)^4}\,ds.
\end{split}\end{equation*}
By \eqref{eq:h1}, there is $C=C(v)$ such that
$\int_{0}^1\frac{1}{\Vg_2(s)}\,ds\le CM_2(v)<\infty$. Therefore, the
above inequality yields
\begin{equation*}\begin{split} &\sup_{\frac12\le
    r<1}\left(\int_{0}^r \frac{v(s)}{(1-s)^3}\,ds\right)^{\frac{1}{2}}
    \left(\int_{r}^1\frac{1}{\Vg_2(s)}\,ds\right)^\frac{1}{2}
    \\ &\le
     C
     \sup_{\frac12\le
    r<1}\left(\int_0^{\frac12}v(s)\,ds+\int_0^r \frac{\vg(s)}{(1-s)^4}\,ds\right)^{\frac{1}{2}} \left(\int_{r}^1\frac{1}{\Vg_2(s)}\,ds\right)^\frac{1}{2}
     \\ & \le C\left(\int_0^{\frac12}v(s)\,ds\int_{0}^1\frac{1}{\Vg_2(t)}\,dt<\infty\right)^{\frac{1}{2}}+
     CM_2(v)\le CM_2(v)<\infty.
     \end{split}\end{equation*}
This finishes the proof.
\end{proof}

\subsection{Hardy-Littlewood type inequalities}

The first result in this subsection gives a sharp condition that
ensures that $\hg$ is well defined on $\Dv$.

\begin{lemma}\label{le:welldef}
Let $v$ be a radial weight  which satisfies \eqref{eq:vg2}. Then there is a positive constant $C(v)$ such that
\begin{equation}\label{eq:welldef}
\int_0^1 |f(t)|\,dt\le C(v) \|f\|_{\Dv},\quad f\in H(\D).
\end{equation}
\end{lemma}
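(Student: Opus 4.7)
The plan is to work with the Taylor expansion $f(z)=\sum_{n\ge 0}a_n z^n$ and reduce the desired inequality to the convergence of a numerical series. The triangle inequality immediately gives
$$
\int_0^1 |f(t)|\,dt \;\le\; |a_0|+\sum_{n\ge 1}\frac{|a_n|}{n+1},
$$
while polar coordinates yield
$\|f\|_{\Dv}^2=|a_0|^2+2\sum_{n\ge 1} n^2|a_n|^2\,v_{2n-1}$
with $v_{2n-1}=\int_0^1 r^{2n-1}v(r)\,dr$. Cauchy--Schwarz then reduces matters to proving that
$S:=\sum_{n\ge 1}\frac{1}{n^2(n+1)^2\,v_{2n-1}}<\infty$, since in that case
$\sum_{n\ge 1}|a_n|/(n+1)\lesssim \|f\|_{\Dv}\sqrt{S}$, and the desired inequality follows.

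To bound $S$, I would first observe the elementary one-sided estimate
$$
v_{2n-1}\;\ge\; \int_{1-1/n}^1 r^{2n-1}v(r)\,dr\;\ge\; (1-1/n)^{2n-1}\,\vg(1-1/n)\;\gtrsim\; \vg(1-1/n),\qquad n\ge 2,
$$
which holds for every radial weight because $(1-1/n)^{2n-1}$ is bounded below by an absolute positive constant. The problem then reduces to showing $\sum_{n\ge 2}\frac{1}{n^4\,\vg(1-1/n)}<\infty$. Since $\vg$ is nonincreasing, a straightforward Riemann-sum comparison followed by the substitution $s=1-1/x$ gives
$$
\sum_{n\ge 2}\frac{1}{n^4\,\vg(1-1/n)}\;\lesssim\; \int_1^\infty \frac{dx}{x^4\,\vg(1-1/x)}\;=\;\int_0^1\frac{(1-s)^2}{\vg(s)}\,ds,
$$
and the last integral is finite by hypothesis \eqref{eq:vg2}.

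There is no real obstacle; the argument is a bookkeeping exercise. The one point that requires some care is that $v$ is not assumed to lie in $\DD$, so Lemma~\ref{Lemma:weights-in-D-hat}(vi) (which would give the full two-sided equivalence $v_{2n-1}\asymp\vg(1-1/n)$) is unavailable to us, and we must settle for the lower bound above. An alternative route through $|f(t)|\le |f(0)|+\int_0^t|f'(s)|\,ds$ followed by a Cauchy--Schwarz split against $(1-s)/\sqrt{\vg(s)}$ would instead demand that $\int_0^1\vg(s)|f'(s)|^2\,ds\lesssim \|f\|_{\Dv}^2$, which via subharmonicity of $|f'|^2$ would force doubling-type control on $\vg$ across intervals shrinking to the boundary—precisely the hypothesis we are trying to avoid. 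The monomial approach sidesteps this entirely.
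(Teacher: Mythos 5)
Your proof is correct and follows essentially the same route as the paper's: expand $f$ in Taylor coefficients, apply Cauchy--Schwarz against the coefficient expression $\|f\|_{\Dv}^2=|a_0|^2+2\sum_{n\ge1}n^2|a_n|^2v_{2n-1}$, and reduce everything to the convergence of $\sum_{n}n^{-2}(n+1)^{-2}v_{2n-1}^{-1}$, which is then compared with $\int_0^1\frac{(1-s)^2}{\vg(s)}\,ds$. The only (welcome) divergence is that you justify $v_{2n-1}\gtrsim\vg(1-1/n)$ by the elementary estimate valid for every radial weight, whereas the paper cites Lemma~\ref{Lemma:weights-in-D-hat}(v)(vi), which formally presupposes $v\in\DD$; your variant therefore works under exactly the stated hypothesis \eqref{eq:vg2}.
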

\begin{proof}
For any $f(z)=\sum_{k=0}^\infty \fg(k)z^k\in H(\D)$, it is clear
that
\begin{equation}\label{eq:welldef2}
\int_0^1 |f(t)|\,dt  \le |\fg(0)|+\left(\sum_{k=1}^\infty
k^2|\fg(k)|^2v_{2k-1}\right)^{1/2} \left(\sum_{k=1}^\infty
\frac{1}{k^2(k+1)^2v_{2k-1}}\right)^{1/2}.\end{equation} By
Lemma~\ref{Lemma:weights-in-D-hat}(v)(vi)
\begin{equation*}\begin{split}
\sum_{k=1}^\infty \frac{1}{k^2(k+1)^2v_{2k-1}} & \le C
\sum_{k=1}^\infty \frac{1}{\vg\left(
1-\frac{1}{k+1}\right)}\int_{1-\frac{1}{k+1}}^{1-\frac{1}{k+2}}(1-s)^2\,ds
\\ & \le C\int_{\frac12}^1 \frac{(1-s)^2}{\vg(s)}\,ds\le C \int_{0}^1 \frac{(1-s)^2}{\vg(s)}\,ds \le C.
\end{split}\end{equation*}
This together with \eqref{eq:welldef2} and the identity
 $
    \|f\|_{\Dv}^2=|\fg(0)|^2+2\sum_{k=1}^\infty |k\fg(k)|^2v_{2k-1},
    $
 implies \eqref{eq:welldef}.
\end{proof}

\par A classical result of Hardy-Littlewood (\cite[Theorem $5.11$]{Duren1970}) says that
$$\int_0^1 M^2_\infty(r,f)\,dr\le C  \|f\|^2_{H^2}.$$
See also the classical F\'ejer-Riesz inequality \cite[Theorem
$3.13$]{Duren1970}. Applying this inequality to dilated functions
$f_r(z)=f(rz)$,\, $0<r<1$, and integrating respect to a radial
weight $\om$, it can be easily obtained that
    \begin{equation}\label{eq:minftyA2}
    \int_0^1
    M^2_\infty(r,f)\,\widehat{\om}(r)\,dr\le C\|f\|^2_{A^2_\om}.
    \end{equation}
The next result shows  a Hardy-Littlewood type inequality in a setting of weighted Dirichlet  spaces.

\begin{lemma}\label{le:minftyDv}
Let $v$ be  a radial weight  which satisfies \eqref{eq:minftyDv}.
Then, there exists $C=C(v)>0$ such that
\begin{equation}\label{eq:minftyDv2}
\int_0^1 M_\infty^2(s,f) \frac{\widehat{v}(s)}{(1-s)^2} \,ds\le CM^2_1(v) \|f\|^2_{\Dv},\quad f\in H(\D).
\end{equation}
\end{lemma}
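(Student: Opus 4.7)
The plan is to reduce the claim to a weighted Hardy (Muckenhoupt) inequality for the integration operator $Tg(s)=\int_0^s g(t)\,dt$, combined with the Bergman-space Hardy–Littlewood estimate \eqref{eq:minftyA2} applied to $f'$. To set this up, I would use radial integration: for $z=re^{i\theta}$,
$$
|f(z)|\leq |f(0)|+\int_0^r |f'(te^{i\theta})|\,dt\leq |f(0)|+\int_0^r M_\infty(t,f')\,dt,
$$
so taking the supremum over $\theta$ and squaring,
$$
M_\infty^2(s,f)\leq 2|f(0)|^2+2\Big(\int_0^s M_\infty(t,f')\,dt\Big)^2.
$$
Integrating against $\widehat v(s)(1-s)^{-2}\,ds$ splits the left-hand side of \eqref{eq:minftyDv2} into two pieces, which I treat separately.

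For the principal piece, apply the classical two-weight Muckenhoupt–Hardy inequality on $(0,1)$ with outer weight $u(s)=\widehat v(s)/(1-s)^2$ and inner weight $w(t)=\widehat v(t)$. Its characteristic is exactly $M_1(v)$ by hypothesis \eqref{eq:minftyDv}, so
$$
\int_0^1\Big(\int_0^s M_\infty(t,f')\,dt\Big)^2\frac{\widehat v(s)}{(1-s)^2}\,ds\leq C\,M_1^2(v)\int_0^1 M_\infty^2(t,f')\,\widehat v(t)\,dt.
$$
Then \eqref{eq:minftyA2} applied to $f'$ with $\omega=v$ controls the right-hand integral by $C\|f'\|_{A^2_v}^2\leq C\|f\|_{\Dv}^2$, which gives the bound $CM_1^2(v)\|f\|_{\Dv}^2$ on this piece.

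The remaining piece is $2|f(0)|^2\int_0^1 \widehat v(s)(1-s)^{-2}\,ds$, and the only non-trivial fact needed is that this integral is finite. Near $s=0$ the integrand is bounded by $4\widehat v(0)$, while for $s\in[\tfrac12,1)$ one invokes \eqref{eq:minftyDv} at $r=\tfrac12$ to get $\int_{1/2}^1\widehat v(s)(1-s)^{-2}\,ds\leq M_1^2(v)/\int_0^{1/2}\widehat v(s)^{-1}\,ds$, which is finite. Since $|f(0)|\leq \|f\|_{\Dv}$, this contributes $C(v)\|f\|_{\Dv}^2$, absorbed into $CM_1^2(v)\|f\|_{\Dv}^2$ by enlarging the $v$-dependent constant.

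The main obstacle is really just verifying that \eqref{eq:minftyDv} is the correct Muckenhoupt characteristic for the weighted Hardy inequality above; once that is identified, the rest is a pointwise reduction plus invoking the known estimate \eqref{eq:minftyA2}, so the proof is fairly short.
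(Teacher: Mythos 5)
Your proof is correct and follows essentially the same route as the paper: both reduce the claim, via $M_\infty(s,f)\le |f(0)|+\int_0^s M_\infty(t,f')\,dt$, to Muckenhoupt's two-weight Hardy inequality with outer weight $\widehat v(s)(1-s)^{-2}$ and inner weight $\widehat v$, whose characteristic is exactly $M_1(v)$, and then invoke \eqref{eq:minftyA2} applied to $f'$ with $\omega=v$. The only (cosmetic) difference is that you spell out the finiteness of $\int_0^1\widehat v(s)(1-s)^{-2}\,ds$ needed for the $|f(0)|^2$ term, which the paper asserts directly from \eqref{eq:minftyDv}.
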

\begin{proof}
By condition \eqref{eq:minftyDv}  there is a constant $C=C(v)>0$
such that $\int_0^1 \frac{\widehat{v}(s)}{(1-s)^2} \,ds\le
CM^2_1(v)<\infty$. Using \cite[Theorem $1$]{Muckenhoupt1972} with
    $$
    U^{2}(s)=\left\{
        \begin{array}{cl}
        \frac{\widehat{v}(s)}{(1-s)^2}, &   0\le s<1\\
        0, & s\ge1
        \end{array}\right.
    $$
    and
    $$
    V^{2}(s)=\left\{
        \begin{array}{cl}
        \widehat{v}(s), &   0\le s<1\\
        0, & s\ge 1
        \end{array}\right..
    $$
we obtain
\begin{equation}\begin{split}\label{eq:minftyDv3}
&\int_0^1 M_\infty^2(s,f) \frac{\widehat{v}(s)}{(1-s)^2} \,ds
\\ & \le CM^2_1(v)|f(0)|^2+ \int_0^1 \left(\int_0^s M_\infty(r,f')\,dr\right)^2 \frac{\widehat{v}(s)}{(1-s)^2} \,ds
\\ & \lesssim CM^2_1(v) \left(|f(0)|^2+ \int_0^1 M_\infty^2(s,f')
\widehat{v}(s)\,ds\right).
\end{split}\end{equation}
Joining \eqref{eq:minftyA2} and \eqref{eq:minftyDv3}, we get
\eqref{eq:minftyDv2}.
\end{proof}
It is worth mentioning that the inequality $$M_\infty(r,f')\le C \frac{ M_\infty\left(\frac{1+r}{2},f\right)}{1-r},\quad 0<r<1,$$ implies
the reverse inequality of \eqref{eq:minftyDv3}
 for any $f\in H(\D)$ and $v\in\DD$.

\subsection{Proof of Theorem \ref{th:hilbertop}.}

\par It is clear that (ii)$\Rightarrow$(i).
\par (i)$\Rightarrow$(iii).
This part of the proof uses ideas from \cite{Muckenhoupt1972}. For
$r\in[0,1)$, set $\phi_r(t)=\frac{1}{\Vg_2(t)}\chi_{[r,1)}(t)$, so
that $\phi_r\in L^2_{\Vg_2}$  by~\eqref{eq:vg2}. Here, as usual,
$\chi_E$ stands for the characteristic function of a set $E$.
Bearing in mind Lemma~\ref{le:minftyDv}, we deduce
    \begin{equation*}
    \begin{split}
    \|\h(\phi_r)\|_{L^2_{\Vg_2}}\lesssim M_1(v) \|\h(\phi_r)\|_{\Dv} \le
   M_1(v) \|\h\|_{\left(L^2_{\Vg_2},\Dv\right)}\|\phi_r\|_{L^2_{\Vg_2}},
    \end{split}
    \end{equation*}
    and hence
    \begin{equation}\label{eq:j12}
    \int_{0}^1 \Vg_2(s)\left(\int_r^1\frac{1}{\Vg_2(t) (1-ts)}\,dt\right)^2\,ds \lesssim \int_r^1 \frac{1}{\Vg_2(t)}\,dt.
    \end{equation}
On the other hand,
    \begin{equation*}
    \begin{split}
     \int_{0}^r \Vg_2(s)\left(\int_r^1\frac{1}{\Vg_2(t) (1-ts)}\,dt\right)^2\,ds
    \ge \frac{1}{4}\left(\int_{0}^r \Vg_4(s)\,ds\right)\left(\int_r^1\frac{1}{\Vg_2(t)}\,dt\right)^2,
    \end{split}
    \end{equation*}
and this, together with \eqref{eq:j12}, implies
    $$M_2(v)\lesssim M_1(v) \|\h\|_{\left(L^2_{\Vg_2},\Dv\right)}<\infty.$$
\par (iii)$\Rightarrow$(ii).  For any $\phi\in L^2_{\Vg_2}$,
    $$
    \left(\hti(\phi)\right)'(z)=\int_0^1\frac{t|\phi(t)|}{(1-tz)^{2}}\,dt,
    $$
and so Minkowski's inequality in continuous form yields
    \begin{equation*}
    \begin{split}
    M_2(r,\left(\hti(\phi)\right)')
     &=\left(\frac{1}{2\pi}\int_0^{2\pi}\left|\int_0^1\frac{|\phi(t)|t}{(1-tre^{i\ti})^{2}}\,dt\right|^2\,d\ti\right)^\frac12
    \\
    &\le\int_0^1|\phi(t)|\left(\int_0^{2\pi}\frac{d\ti}{|1-tre^{i\ti}|^{4}}\right)^\frac12\,dt
    \asymp\int_0^1\frac{|\phi(t)|}{(1-tr)^{3/2}}\,dt.
    \end{split}
    \end{equation*}
Hence, decomposing the range of variation of $t$, we obtain
    \begin{equation}
    \begin{split}\label{eq:j13}
    \|\hti(\phi)\|^2_{\Dv}\lesssim I_1(r)+I_2(r)+|\hti(\phi)(0)|^2
    \end{split}
    \end{equation}
where
    $$
    I_1(r)=\int_0^1\left(\int_0^r\frac{|\phi(t)|}{(1-t)^{3/2}}\,dt\right)^2v(r)\,dr
    $$
and
    $$
    I_2(r)=\int_0^1\left(\int_r^1\frac{|\phi(t)|}{(1-tr)^{3/2}}\,dt\right)^2 v(r)\,dr.
    $$
\par By \eqref{eq:h1}
\begin{equation}\label{eq:hcero}
|\hti(\phi)(0)|^2\le \| \phi \|^2_{L^2_{\Vg_{2}}}\int_0^1
\frac{1}{\Vg_2(s)}\,ds\le C(v)M^2_2(v) \| \phi
\|^2_{L^2_{\Vg_{2}}}<\infty.
\end{equation}
The inequality
    \begin{equation}\label{eq:j13n}
    \begin{split}
    I_1(r)\lesssim \|\phi\|^2_{L^2_{\Vg_2}}
    \end{split}
    \end{equation}
can be written as
    $$
    \int_0^1\left(\int_0^r \Phi(t)\,dt\right)^2 U^2(r)\,dr\lesssim \int_0^1 \Phi^2(r)V^2(r)\,dr,
    $$
where
    $$
    U^{2}(x)=\left\{
        \begin{array}{cl}
        v(x), &   0\le x<1\\
        0, & x\ge1
        \end{array}\right.,
    $$
    $$
    V^{2}(x)=\left\{
        \begin{array}{cl}
        (1-x)\widehat{v}(x), &   0\le x<1\\
        0, & x\ge1
        \end{array}\right.,
    $$
and $\Phi(t)=\frac{|\phi(t)|}{(1-t)^{\frac32}}$. From this, by
\cite[Theorem~1]{Muckenhoupt1972},
 \eqref{eq:j13n} holds if and only if
\begin{equation*}
M_3(v)=\sup_{0<r<1}\vg(r)^{\frac12}\left(\int_{0}^r \frac{1}{(1-s)\vg(s)}\,ds \right)^{\frac12}<\infty.
\end{equation*}
Using  \eqref{eq:minftyDv}, we get
\begin{equation*}\begin{split}
\int_0^r \frac{1}{(1-s)\vg(s)}\,ds &\le\frac{1}{1-r} \int_0^r \frac{1}{\vg(s)}\,ds
 \le \frac{M^2_1(v)}{(1-r)\int_r^1 \Vg_2(s)\,ds}
\\ & \le \frac{M^2_1(v)}{(1-r)\int_r^{\frac{1+r}{2}} \Vg_2(s)\,ds}
 \le \frac{CM^2_1(v)}{\vg\left(\frac{1+r}{2}\right)}
 \le \frac{CM^2_1(v)}{\vg\left(r\right)},
\end{split}\end{equation*}
which implies that $ M_3(v)\le CM_1(v)$, and so by
\cite[Theorem~1]{Muckenhoupt1972}
 \begin{equation}
    \begin{split}\label{eq:j14}
    I_1(r)\le C(v)M^2_1(v)\|\phi\|^2_{L^2_{\Vg_2}}.
    \end{split}
    \end{equation}
Moreover,  by
applying \cite[Theorem~2]{Muckenhoupt1972} with
    $$
    U^{2}(x)=\left\{
        \begin{array}{cl}
        \frac{v(x)}{(1-x)^3}, &   0\le x<1\\
        0, & x\ge1
        \end{array}\right.,
    $$
and
    $$
    V^{2}(x)=\left\{
        \begin{array}{cl}
        \Vg_2(x), &   0\le x<1\\
        0, & x\ge1
        \end{array}\right.,
    $$
we deduce that
    \begin{equation*}
    \begin{split}
    I_2(r)\lesssim\int_0^1\left(\int_r^1\phi(t)\,dt\right)^2\frac{v(r)}{(1-r)^3}\,dr\le M^2_4(v)\|\phi\|^2_{L^2_{\Vg_2}},
    \end{split}
    \end{equation*}
    holds whenever
 $$M_4(v)=   \sup_{0\le
    r<1}\left(\int_{0}^r \frac{v(s)}{(1-s)^3}\,ds\right)^{\frac{1}{2}}
    \left(\int_{r}^1\frac{1}{\Vg_2(s)}\,ds\right)^\frac{1}{2}<\infty.$$
By Lemma~\ref{le:weight1}, $M_4(v)\le CM_2(v)<\infty$.
This together with \eqref{eq:j13}, \eqref{eq:hcero}  and \eqref{eq:j14} gives
(iii)$\Rightarrow$(ii). Going further, since there is an absolute constant $K>0$ such that $\min\{M_1(v),M_2(v)\}>K$, we get
   $$ \|\hti\|_{\left(L^2_{\Vg_{2}},\Dv\right)}\lesssim M_1(v)M_2(v).
    $$ \hfill$\Box$

Corollary~\ref{co:hilbertop} follows from Theorem~\ref{th:hilbertop}
and Lemma~\ref{le:minftyDv}.

\section{Proof of Theorem~\ref{th:mainhg}}
The right choice of the norm used is in many cases a key to a good
understanding of how a concrete operator acts in a given space. Here
the spaces $\B(2,p)$ will be equipped with an $l^p$- norm of the
$H^2$ norms of dyadic blocks of the Maclaurin series. In fact, a
calculation shows that
\begin{equation*}
    \left\|g\right\|^2_{\B(2, \infty)}\asymp |\widehat{g}(0)|^2+\sup_{n\in\N}\left(\frac{1}{2^n}\sum_{k\in I(n)}k^2|\widehat{g}(k)|^2\right),
    \end{equation*}
    where $g(z)=\sum_{k=0}^\infty \widehat{g}(k)z^k$ and $I(n)=\{k\in\N: 2^n\le k<2^{n+1}-1\}$,\, $n\in\N$.
     The same techniques allow us to prove that
\begin{equation*}
  g \in b (2, \infty) \Leftrightarrow \lim_{n\to\infty}\frac{1}{2^n}\sum_{k\in I(n)}k^2|\widehat{g}(k)|^2=0.
    \end{equation*}
Throughout this section, the expression $\widetilde{g_k}$ will
denote $k^2|\hat{g}(k)|^2$. Using \cite[Theorem~$1$]{MatelPav}
 it  can also be proved that
\begin{equation*}
    \left\|g\right\|^p_{\B(2, p)}\asymp |\widehat{g}(0)|^p+\sum_{n=0}^\infty 2^{-\frac{np}{2}}\left(\sum_{k\in I(n)}\widetilde{g_k}\right)^{\frac{p}{2}},
    \quad 0<p<\infty.
    \end{equation*}

\subsection{Boundedness and compactness}

\begin{proposition}\label{p=infty}
Let $g\in H(\D)$ and $v\in \DD$ which satisfies the conditions \eqref{eq:minftyDv} and \eqref{eq:h1}.   Then
$\mathcal{H}_g$ is bounded on $\Dv$ if and only $g\in \B(2,\infty)$. Moreover,
$$\| \hg\|\asymp \| g-g(0)\|_{ \B(2,\infty)}.$$
\end{proposition}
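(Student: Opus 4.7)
The plan is to treat the two directions of the equivalence separately. The upper bound $\|\hg\|\lesssim \|g-g(0)\|_{\B(2,\infty)}$ will be reduced to Theorem~\ref{th:hilbertop} by using $\hti$ as a majorant; the lower bound will be established by testing $\hg$ against polynomial blocks built from the dyadic intervals $I(n)$.

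For the forward direction, the starting point is $(\hg f)'(z)=\int_0^1 tf(t)g''(tz)\,dt$. The hypothesis $g\in\B(2,\infty)$ gives $M_2(\rho,g')\lesssim \|g\|_{\B(2,\infty)}(1-\rho)^{-1/2}$, and the standard Cauchy estimate $(1-s)M_2(s,g'')\lesssim M_2(\tfrac{1+s}{2},g')$ upgrades this to $M_2(s,g'')\lesssim \|g\|_{\B(2,\infty)}(1-s)^{-3/2}$. Minkowski's inequality in continuous form then yields
\begin{equation*}
M_2(r,(\hg f)')\lesssim \|g\|_{\B(2,\infty)}\int_0^1\frac{|f(t)|}{(1-tr)^{3/2}}\,dt,
\end{equation*}
and by the computation appearing in the proof of Theorem~\ref{th:hilbertop}\,(iii)$\Rightarrow$(ii) the right-hand side is comparable to $\|g\|_{\B(2,\infty)}\,M_2(r,(\hti|f|_{[0,1)})')$. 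Integrating against $2rv(r)\,dr$ gives $\|(\hg f)'\|_{A^2_v}\lesssim \|g\|_{\B(2,\infty)}\|\hti(|f|_{[0,1)})\|_{\Dv}$, which Theorem~\ref{th:hilbertop} followed by Lemma~\ref{le:minftyDv} chains into $\lesssim M_1(v)^2 M_2(v)\|g\|_{\B(2,\infty)}\|f\|_{\Dv}$. The point-value contribution $|\hg f(0)|=|g'(0)|\cdot|\int_0^1 f\,dt|$ is handled by $|g'(0)|\le \|g\|_{\B(2,\infty)}$ together with Lemma~\ref{le:welldef}.

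For the reverse direction, the dyadic block description $\|g-g(0)\|_{\B(2,\infty)}^2\asymp \sup_n 2^{-n}\sum_{k\in I(n)}k^2|\widehat g(k)|^2$ recalled at the start of the section suggests testing $\hg$ against the polynomials $f_n(z)=\sum_{k\in I(n)}z^k$ for $n\ge 1$. Lemma~\ref{Lemma:weights-in-D-hat}(vi) gives $v_{2k-1}\asymp \vg(1-2^{-n})$ uniformly for $k\in I(n)$, so $\|f_n\|_{\Dv}^2\asymp 2^{3n}\vg(1-2^{-n})$. Expanding,
\begin{equation*}
(\hg f_n)'(z)=\sum_{k\ge 2}k(k-1)\widehat g(k)\mu_{k-1}(f_n)\,z^{k-2},\qquad \mu_{k-1}(f_n)=\sum_{j\in I(n)}\frac{1}{j+k},
\end{equation*}
and when $k\in I(n)$ every denominator satisfies $j+k\asymp 2^n$, whence $\mu_{k-1}(f_n)\asymp 1$. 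Restricting the Parseval sum for $\|(\hg f_n)'\|_{A^2_v}^2$ to those indices gives
\begin{equation*}
\|\hg f_n\|_{\Dv}^2\gtrsim \vg(1-2^{-n})\sum_{k\in I(n)}k^4|\widehat g(k)|^2\asymp 2^{2n}\vg(1-2^{-n})\sum_{k\in I(n)}k^2|\widehat g(k)|^2,
\end{equation*}
and comparing with $\|\hg f_n\|_{\Dv}^2\le \|\hg\|^2\|f_n\|_{\Dv}^2$ yields $2^{-n}\sum_{k\in I(n)}k^2|\widehat g(k)|^2\lesssim \|\hg\|^2$. The base case $n=0$ reduces to $\hg f_0(0)=\widehat g(1)/2$, which together with $\|f_0\|_{\Dv}^2\asymp v_1$ gives $|\widehat g(1)|^2\lesssim \|\hg\|^2$. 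Taking the supremum over $n\ge 0$ completes the argument.

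The main obstacle is bookkeeping of the $v$-dependent constants: one must verify that the tight comparison between the Minkowski bound for $M_2(r,(\hg f)')$ and $M_2(r,(\hti|f|)')$ is preserved throughout the chain, and that the dyadic uniformity $v_{2k-1}\asymp \vg(1-2^{-n})$ really isolates the block $I(n)$ in the lower bound so that the restricted Parseval sum does not collapse. Both points are supplied by Lemma~\ref{Lemma:weights-in-D-hat}; after that, the proof is essentially an assembly of Theorem~\ref{th:hilbertop} and Lemmas~\ref{le:welldef}, \ref{le:minftyDv} with the dyadic test functions described above.
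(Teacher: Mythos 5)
Your argument is correct in outline and takes a genuinely different route from the paper's in both directions. For the upper bound, the paper never leaves the coefficient side: it computes $\|\hg f\|^2_{\Dv}$ from the moments $\int_0^1 t^jf(t)\,dt$ via \eqref{eqn10}, groups the coefficients of $g$ into the dyadic blocks $I(n)$, extracts $\sup_n 2^{-n}\sum_{k\in I(n)}\widetilde{g_k}$, and recognizes the remaining sum as $\|\hti(f)\|^2_{\Dv}$, to which Corollary~\ref{co:hilbertop} applies; your Minkowski/integral-means argument arrives at the same majorant $\int_0^1|f(t)|(1-tr)^{-3/2}\,dt$ analytically. For the lower bound, the paper tests against the kernel-type functions $f_N(z)=(1-a_N)^{\lambda/2}\vg(a_N)^{-1/2}(1-a_Nz)^{(1-\lambda)/2}$, with $\lambda$ chosen through Lemma~\ref{Lemma:weights-in-D-hat}(ii)(vii), while your dyadic polynomials $\sum_{k\in I(n)}z^k$ do the same job more combinatorially: the computation $\mu_{k-1}(f_n)\asymp 1$ and $v_{2k-1}\asymp\vg(1-2^{-n})$ for $k\in I(n)$ is sound, and it avoids the choice of $\lambda$. (Minor bookkeeping: $\hg$ sees only $g'$, so all your constants are really $\sup_\rho M_2(\rho,g')(1-\rho)^{1/2}=\|g-g(0)\|_{\B(2,\infty)}$, as required by the statement.)

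The one step that must be repaired is the claim that $\int_0^1|f(t)|(1-tr)^{-3/2}\,dt$ is \emph{comparable} to $M_2\bigl(r,(\hti|f|)'\bigr)$. The computation in the proof of Theorem~\ref{th:hilbertop} gives only the inequality $M_2(r,(\hti\phi)')\lesssim\int_0^1|\phi(t)|(1-tr)^{-3/2}\,dt$; the reverse direction fails in general, because for power series with nonnegative coefficients the radial value can strictly dominate the $L^2$ mean (compare $(1-r)^{-1}$ with $(1-r)^{-1/2}$ for $\sum z^n$). Consequently the displayed chain $\|(\hg f)'\|_{A^2_v}\lesssim\|g\|_{\B(2,\infty)}\|\hti(|f|)\|_{\Dv}$ is not justified as written, and Theorem~\ref{th:hilbertop} cannot be cited as a black box here. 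The fix is immediate: integrate your pointwise bound directly, so that
\begin{equation*}
\|(\hg f)'\|_{A^2_v}^2\lesssim\|g-g(0)\|^2_{\B(2,\infty)}\int_0^1\left(\int_0^1\frac{|f(t)|}{(1-tr)^{3/2}}\,dt\right)^2 v(r)\,dr,
\end{equation*}
and observe that the right-hand integral is exactly $I_1+I_2$ from the proof of (iii)$\Rightarrow$(ii) of Theorem~\ref{th:hilbertop}, which is bounded there by a constant times $\|f\|^2_{L^2_{\Vg_2}}$ under \eqref{eq:minftyDv} and \eqref{eq:h1}; Lemma~\ref{le:minftyDv} then gives $\|f\|_{L^2_{\Vg_2}}\lesssim M_1(v)\|f\|_{\Dv}$. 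With that substitution your proof is complete.
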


\begin{proof}

We use that the  norm of $\mathcal{H}_g(f)$ can be computed from the Taylor coefficients of $g$ and the moments of $f\chi_{[0,1)
}$
\begin{equation}\label{eqn10}
\|\mathcal{H}_g(f) \|^2_{\Dv} = \left|\go(1)\int_0^1
f(t)\,dt\right|^2+ \sum_{j=1}^{\infty}
j^2\widetilde{g_{j+1}}\left|\int_0^1 t^jf(t)\,dt\right|^2v_{2j-1}.
\end{equation}
\par Assume that $g\in \B(2,\infty)$, and let us see first that
\begin{equation}\label{eqn11}\sum_{j=2}^{\infty} j^2\widetilde{g_{j+1}} \left|\int_0^1
t^jf(t)\,dt\right|^2v_{2j-1} \le C \| g-g(0)\|^2_{ \B(2,\infty)}\|
f\|^2_{\Dv}. \end{equation} Indeed, the left-hand side of the above
can be decomposed in dyadic pieces in terms of the parameter $j$,
and is therefore dominated by
\begin{equation*}\begin{split}
&C \sum_{n=1}^\infty v_{2^{n+1}-1} 2^{2n} \left(\int_0^1
t^{2^n}|f(t)|\,dt\right)^2\sum_{j\in I(n)}\widetilde{g_{j+1}}
\\ & \le  C \| g-g(0)\|^2_{ \B(2,\infty)}\sum_{n=1}^\infty v_{2^{n+1}-1} 2^{3n} \left(\int_0^1 t^{2^n}|f(t)|\,dt\right)^2
\\ & \le  C \| g-g(0)\|^2_{ \B(2,\infty)}\sum_{m=0}^\infty
\sum_{j\in I(m)}j^2 \left(\int_0^1 t^j|f(t)|\,dt\right)^2v_{2j-1}
\\ & \le C \| g-g(0)\|^2_{ \B(2,\infty)}\| \hti(f)\|^2_{\Dv}.
\end{split}\end{equation*}
By Corollary~\ref{co:hilbertop}, the last quantity is less or equal
than\, $C \| g-g(0)\|^2_{ \B(2,\infty)}\| f\|^2_{\Dv},$ showing the
validity of \eqref{eqn11}.

Moreover, using Corollary~\ref{co:hilbertop} again, the remaining
terms in \eqref{eqn10} in\-vol\-ving $\hat{g}(1)$ and $\hat{g}(2)$
can easily be controlled by $\|g-g(0)\|^2_{ \B(2,\infty)}\|
f\|^2_{\Dv}$. This together with \eqref{eqn10} and \eqref{eqn11},
implies that $\mathcal{H}_g$ is bounded on $\Dv$ with $$\|
\hg\|\lesssim \| g-g(0)\|_{ \B(2,\infty)}.$$

Reciprocally, assume that $\mathcal{H}_g$ is bounded on $\Dv$. For
each $N \in \N$,
 denote $a_N=1-2^{-N}$ and consider the function $f_N$ defined, for $z \in \D$ as follows:
\begin{equation*}
f_N(z) =
(1-a_N)^{\frac{\lambda}{2}}\vg(a_N)^{-1/2}(1-a_Nz)^{\frac{1-\lambda}{2}},
\end{equation*}
By Lemma~\ref{Lemma:weights-in-D-hat}(ii)(vii), $\lambda>0$ can be
choosen  big enough so that
\begin{equation}\label{eqn15n}
\sup_N\| f_N\|_{\Dv}<\infty.
\end{equation}
We are going to see now that
\begin{equation*} \|\mathcal{H}_g(f_N) \|^2_{\Dv} \ge C 2^{-N} \sum_{j\in I(N)} \widetilde{g_{j+1}},\quad
N\in\N. \end{equation*}

By \eqref{eqn10}, the left hand side above is larger or equal than
\begin{equation*}\begin{split}
\\ &\sum_{n=0}^\infty 2^{2n}\left(\int_0^1
t^{2^{n+1}}f_N(t)\,dt\right)^2v_{2^{n+2}-1} \sum_{j\in I(n)}
\widetilde{g_{j+1}}
\\ & \ge 2^{2N}\left(\int_0^1 t^{2^{N+1}}f_N(t)\,dt\right)^2v_{2^{N+2}-1} \sum_{j\in I(N)} \widetilde{g_{j+1}}
\\ & \ge C2^{2N}\left(\int_{a_N}^1 t^{2^{N+1}}f_N(t)\,dt\right)^2\vg(a_N) \sum_{j\in I(N)} \widetilde{g_{j+1}}
\\ & \ge C2^{-N}\sum_{j\in I(N)} \widetilde{g_{j+1}}
\end{split}\end{equation*}
which together with \eqref{eqn15n} and the inequality $\int_0^1 f_0(t)\,dt\ge C>0$ gives that
$$ \| g-g(0)\|_{ \B(2,\infty)}\lesssim \| \hg\|. $$
\end{proof}
\par Now, we deal with the compactness of generalized Hilbert operators $\hg$.
\begin{proposition}\label{compactness}
Let $g\in H(\D)$ and $v\in \DD$ which satisfies the conditions \eqref{eq:minftyDv} and \eqref{eq:h1}.   Then
$\mathcal{H}_g$ is compact on $\Dv$ if and only $g\in b (2,\infty)$.
\end{proposition}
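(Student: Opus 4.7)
The plan is to deduce the proposition from the dyadic description
$g\in b(2,\infty)\Leftrightarrow \lim_{n\to\infty}\frac{1}{2^n}\sum_{k\in I(n)}k^2|\widehat{g}(k)|^2=0$
recorded at the beginning of Section~3, together with the norm estimate of Proposition~\ref{p=infty}.

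For the sufficiency direction I would approximate $g$ by its Taylor polynomials. Set $g_m(z)=\sum_{k=0}^{2^m-1}\widehat{g}(k)z^k$ and note that, since $g_m$ is a polynomial,
$\mathcal{H}_{g_m}(f)(z)=\int_0^1 f(t)g_m'(tz)\,dt$
is a polynomial in $z$ of degree at most $2^m-2$ whose coefficients depend linearly on the moments $\int_0^1 t^j f(t)\,dt$; hence $\mathcal{H}_{g_m}$ has finite rank and is in particular compact. Because $(g-g_m)(0)=0$, Proposition~\ref{p=infty} yields
$$\|\hg-\mathcal{H}_{g_m}\|=\|\mathcal{H}_{g-g_m}\|\asymp\|g-g_m\|_{\B(2,\infty)}\asymp\sup_{n\ge m}\left(\frac{1}{2^n}\sum_{k\in I(n)}k^2|\widehat{g}(k)|^2\right)^{1/2},$$
and the hypothesis $g\in b(2,\infty)$ forces the right-hand side to tend to $0$ as $m\to\infty$. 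Thus $\hg$ is a norm limit of compact operators, so it is compact.

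For the necessity direction I would reuse the test functions
$f_N(z)=(1-a_N)^{\lambda/2}\vg(a_N)^{-1/2}(1-a_Nz)^{(1-\lambda)/2}$, $a_N=1-2^{-N}$, from the proof of Proposition~\ref{p=infty}. That proof already provides $\sup_N\|f_N\|_{\Dv}<\infty$ together with the lower bound
$$\|\hg(f_N)\|_{\Dv}^2\gtrsim 2^{-N}\sum_{j\in I(N)}\widetilde{g_{j+1}}.$$
The new ingredient needed is the weak convergence $f_N\rightharpoonup 0$ in $\Dv$. Choosing $\lambda$ larger than the exponent $\b=\b(v)$ furnished by Lemma~\ref{Lemma:weights-in-D-hat}(ii) one obtains $\vg(a_N)\gtrsim(1-a_N)^{\b}$, so the prefactor $(1-a_N)^{\lambda/2}\vg(a_N)^{-1/2}$ tends to $0$ and hence $f_N\to 0$ uniformly on compact subsets of~$\D$. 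Since $\Dv$ is a reproducing kernel Hilbert space, this pointwise decay combined with the uniform norm bound (via density of the span of reproducing kernels) yields $f_N\rightharpoonup 0$. Compactness of $\hg$ then forces $\|\hg(f_N)\|_{\Dv}\to 0$, and the lower bound above gives $2^{-N}\sum_{j\in I(N)}\widetilde{g_{j+1}}\to 0$, that is, $g\in b(2,\infty)$.

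The principal obstacle in this plan is the verification of the weak convergence $f_N\rightharpoonup 0$, which requires combining the reproducing kernel structure of $\Dv$ with a quantitative lower bound on $\vg(a_N)$ coming from the doubling condition $v\in\DD$ through Lemma~\ref{Lemma:weights-in-D-hat}(ii). Once this is in place the remainder of the argument reduces to the dyadic bookkeeping already encoded in the proof of Proposition~\ref{p=infty}.
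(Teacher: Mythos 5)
Your argument is correct and is precisely the ``standard technique'' the paper invokes when it declares the proof of Proposition~\ref{compactness} omitted: finite-rank (Taylor-section) approximation of $\hg$ controlled by the norm equivalence $\|\mathcal{H}_{g-g_m}\|\asymp\|g-g_m\|_{\B(2,\infty)}$ from Proposition~\ref{p=infty} for sufficiency, and the test functions $f_N$, shown to be bounded and weakly null, for necessity. The only cosmetic difference is that the paper routes the step ``bounded and locally uniformly null implies the images are norm-null'' through Lemma~\ref{le:c1} and the usual sequential compactness criterion, whereas you phrase it via weak convergence in the reproducing kernel Hilbert space $\Dv$; these are equivalent here, and your quantitative lower bound $\vg(a_N)\gtrsim(1-a_N)^{\b}$ from Lemma~\ref{Lemma:weights-in-D-hat}(ii) correctly supplies the locally uniform decay of $f_N$.
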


We will need the following lemma, which can be easily proved by
using \eqref{eq:vg2}, H\"{o}lder's inequality and \eqref{eq:minftyDv2}.

\begin{lemma}\label{le:c1}
Let $v$ be a radial weight such that
\eqref{eq:vg2} and \eqref{eq:minftyDv} are satisfied. Let $\{f_j\} _{j=1}^\infty $ be a
sequence in $\Dv$ such that
$\sup_{j}\|f_j\|_{\Dv}=K<\infty$ and $f_j\to 0$, as
$j\to\infty $, uniformly on compact subsets of\, $\D$. Then the
following assertions hold:
\begin{itemize}
\item[\rm(i)] $\lim_{j\to\infty}\int_0^1 |f_j(t)|\,dt=0$;
\item[\rm(ii)] $\mathcal H_g(f_j)\to 0$, as $j\to\infty$,
uniformly on compact subsets of $\D$ for each $g\in H(\D)$.
\end{itemize}
\end{lemma}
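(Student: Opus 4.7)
The plan is to treat (i) first and then deduce (ii) from it by a simple uniform bound on $g'$.

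For (i), I would split the integral at a point $r\in(0,1)$:
\begin{equation*}
\int_0^1 |f_j(t)|\,dt = \int_0^r |f_j(t)|\,dt + \int_r^1 |f_j(t)|\,dt.
\end{equation*}
The first summand tends to $0$ as $j\to\infty$ for any fixed $r$, since $f_j\to 0$ uniformly on the compact set $[0,r]\subset\D$. The tail piece is where the weight hypotheses enter. Applying Hölder's inequality with the splitting $1=\frac{\vg(t)^{1/2}}{1-t}\cdot\frac{1-t}{\vg(t)^{1/2}}$, I get
\begin{equation*}
\int_r^1 |f_j(t)|\,dt \le \int_r^1 M_\infty(t,f_j)\,dt \le \left(\int_r^1 M_\infty^2(t,f_j)\,\frac{\vg(t)}{(1-t)^2}\,dt\right)^{1/2}\left(\int_r^1\frac{(1-t)^2}{\vg(t)}\,dt\right)^{1/2}.
\end{equation*}
By \eqref{eq:minftyDv2} the first factor is bounded by $C M_1(v)\|f_j\|_{\Dv}\le C M_1(v) K$, uniformly in $j$. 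By \eqref{eq:vg2}, the function $r\mapsto \int_r^1\frac{(1-t)^2}{\vg(t)}\,dt$ is finite on $[0,1)$ and tends to $0$ as $r\to 1^-$. So given $\varepsilon>0$, first pick $r$ close enough to $1$ to make the second factor (hence the whole tail) smaller than $\varepsilon/2$ uniformly in $j$, and then choose $j$ large enough that the compact piece is less than $\varepsilon/2$. This yields (i).

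For (ii), fix $g\in H(\D)$ and a compact set $E\subset\D$. Choose $\rho\in(0,1)$ with $E\subset\{|z|\le\rho\}$. For $z\in E$ and $t\in[0,1]$, we have $|tz|\le\rho$, so $|g'(tz)|\le M:=\max_{|w|\le\rho}|g'(w)|<\infty$. Hence
\begin{equation*}
|\hg(f_j)(z)|\le \int_0^1 |f_j(t)||g'(tz)|\,dt \le M\int_0^1|f_j(t)|\,dt,
\end{equation*}
and the right-hand side tends to $0$ by (i), independently of $z\in E$. This yields the uniform convergence on compact subsets.

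The only delicate point is the uniform-in-$j$ control of the tail in (i), which is precisely what \eqref{eq:minftyDv2} together with the integrability of $(1-t)^2/\vg(t)$ guarantees; once this is in place, both parts are routine.
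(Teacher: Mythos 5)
Your proof is correct and follows exactly the route the paper indicates (the paper only sketches it): split at $r$, use Cauchy--Schwarz with the weight factor $\vg(t)^{1/2}/(1-t)$, control the tail uniformly in $j$ via \eqref{eq:minftyDv2} together with the integrability in \eqref{eq:vg2}, and deduce (ii) from (i) by bounding $g'$ on compact sets.
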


We also remind the reader that the norm convergence in $\Dv$,
$v\in\DD$, implies the uniform convergence on compact subsets of
$\D$ by \cite[Lemma~3.2]{PelSum14}. With these tools and from the
proof of Theorem~\ref{p=infty}, Proposition~\ref{compactness} can be
shown using standard techniques. Therefore, its proof will be
omitted. See \cite[Section $7$]{GaGiPeSis} or
\cite[Section~7]{PelRathg} for further details.

\subsection{Hilbert-Schmidt operators}

First, we observe that
\begin{equation}\label{eq:emdv}
\B(2,2)\subset \Dv \quad\text{if $v\in\DD$ satisfies that $\int_0^1
\Vg_2(s)\,ds<\infty$ }.
\end{equation}
In fact,
by Lemma~\ref{Lemma:weights-in-D-hat}(vi)
\begin{equation*}\begin{split}
 &\sup_{j\in\N}(j+1)v_{2j+1}\asymp \sup_{j\in\N}(j+1)\vg\left( 1-\frac{1}{2j+1}\right)
 \\ &\asymp\sup_{j\in\N}\int_{1-\frac{1}{j+1}}^{1-\frac{1}{j+2}}\Vg_2(s)\,ds
\le \int_0^1 \Vg_2(s)\,ds<\infty,
\end{split}\end{equation*}
which implies \eqref{eq:emdv}.

\begin{proposition}\label{p=2}
Let $g\in H(\D)$ and $v\in \DD$ which satisfies the conditions
\eqref{eq:minftyDv} and \eqref{eq:h1}. Then $\mathcal{H}_g \in
S_2(\Dv)$ if and only if $g\in \B(2,2)$. Moreover,
$$\| \hg\|_{ S_2(\Dv)}\asymp \| g-g(0)\|_{ \B(2,2)}.$$
\end{proposition}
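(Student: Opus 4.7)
The plan is to exploit that $S_2$ is a Hilbert space and compute $\|\hg\|_{S_2(\Dv)}^2$ directly in the monomial orthonormal basis $e_0 = 1$, $e_k(z) = z^k/\sqrt{2 k^2 v_{2k-1}}$ ($k \ge 1$) of $\Dv$. Using the series expansion $\hg(z^k)(z) = \sum_{j\ge 0} \frac{(j+1)\widehat{g}(j+1)}{k+j+1} z^j$ together with the Parseval-type identity for $\|\cdot\|_{\Dv}$, then interchanging the order of summation, one obtains
$$
\|\hg\|_{S_2(\Dv)}^2 \;=\; \widetilde{g_1}\, S_0 \;+\; 2\sum_{j\ge 1} j^2\, v_{2j-1}\,\widetilde{g_{j+1}}\, S_j,
\qquad S_j := \frac{1}{(j+1)^2} + \sum_{k\ge 1} \frac{1}{2k^2 v_{2k-1}(j+k+1)^2}.
$$
Since the dyadic characterization recalled at the start of Section~3 gives $\|g-g(0)\|_{\B(2,2)}^2 \asymp \sum_{j\ge 0} \widetilde{g_{j+1}}/(j+1)$, the proposition reduces to the pointwise comparisons $S_0 \asymp 1$ and $S_j \asymp (j^3 v_{2j-1})^{-1}$ for $j\ge 1$.

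Converting the $k$-series in $S_j$ to an integral via the substitution $k \sim 1/(1-r)$ and Lemma~\ref{Lemma:weights-in-D-hat}(v)--(vi), which gives $v_{2k-1}\asymp \vg(1-1/k)$, produces
$$
S_j \asymp \frac{1}{j^2} + \frac{1}{j^2}\int_0^{r_j} \frac{dr}{\vg(r)} + \int_{r_j}^{1} \frac{(1-r)^2}{\vg(r)}\,dr,\qquad r_j := 1 - \tfrac1j.
$$
The lower bound $S_j \gtrsim (j^3 v_{2j-1})^{-1}$ is immediate from the contribution of the dyadic block $k\in[j,2j]$ of the original series, using the doubling of $\vg$ to replace $v_{2k-1}$ by $v_{2j-1}$ there. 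The matching upper bound is the heart of the argument: \eqref{eq:minftyDv} combined with doubling yields both $\vg(r_j)\lesssim 1-r_j$ (which makes the first summand $1/j^2$ dominated by $1/(j^3 v_{2j-1})$) and $\int_0^{r_j}\vg(r)^{-1}\,dr\lesssim (j\vg(r_j))^{-1}$ (controlling the middle summand), while doubling of $\vg$ alone gives $\int_{r_j}^1 (1-r)^2/\vg(r)\,dr \lesssim (1-r_j)^3/\vg(r_j) \asymp (j^3 v_{2j-1})^{-1}$. The case $S_0 \asymp 1$ uses only the bound $\int_0^1 (1-r)^2/\vg(r)\,dr<\infty$ coming from \eqref{eq:vg2}, which is implied by \eqref{eq:h1}.

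Plugging these estimates back gives $\|\hg\|_{S_2}^2 \asymp \widetilde{g_1} + \sum_{j\ge 1} \widetilde{g_{j+1}}/(j+1) \asymp \|g - g(0)\|_{\B(2,2)}^2$, delivering simultaneously the equivalence (i)$\Leftrightarrow$(ii) and the norm comparison. The main obstacle is the precise two-sided estimate of $S_j$: the Muckenhoupt-type hypothesis \eqref{eq:minftyDv} must be paired with the doubling of $\vg$ in just the right order so that each of the three summands in the integral form of $S_j$ is shown to be comparable to the target $(j^3 v_{2j-1})^{-1}$, with constants depending only on $M_1(v)$ and the doubling constant.
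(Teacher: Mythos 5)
Your overall strategy is the same as the paper's: test $\hg$ against the monomial orthonormal basis $e_n=z^n/\sqrt{2n^2v_{2n-1}}$, interchange the order of summation, and reduce everything to the two-sided estimate $S_j\asymp (j^3 v_{2j-1})^{-1}$ (the paper estimates the identical quantity in the form $\sum_n \frac{1}{n^2(n+k+1)^2 v_{2n-1}}\asymp\frac{1}{(k+1)^3 v_{2k+1}}$), with the same three-way splitting of the range of summation and the same use of \eqref{eq:minftyDv} on the range $k\lesssim j$. Most of the details are right: the lower bound from the block $k\in[j,2j]$ needs only the monotonicity of the moments, and the bounds $\vg(r)\lesssim 1-r$ and $\int_0^r \vg(s)^{-1}\,ds\lesssim (1-r)/\vg(r)$ do follow from \eqref{eq:minftyDv} combined with doubling, as does the treatment of $S_0$ via \eqref{eq:vg2}.

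There is, however, one concretely false justification: the claim that ``doubling of $\vg$ alone gives $\int_{r_j}^1(1-r)^2/\vg(r)\,dr\lesssim(1-r_j)^3/\vg(r_j)$.'' It does not. For the doubling weight $v(s)=(1-s)^\alpha$ with $\alpha\ge2$ one has $\vg(s)\asymp(1-s)^{\alpha+1}$, and $\int_{r}^1(1-s)^{1-\alpha}\,ds=\infty$, so under doubling alone the tail integral need not even be finite. This is precisely the point where the second Muckenhoupt hypothesis \eqref{eq:h1} must enter: it gives $\int_{r}^1 \Vg_2(s)^{-1}\,ds\le M_2^2(v)\big(\int_0^{r}\Vg_4(s)\,ds\big)^{-1}$, and only then does doubling (via Lemma~\ref{Lemma:weights-in-D-hat}) convert the denominator into something comparable to $(1-r)^{-3}\vg(r)$; this is the computation carried out in \eqref{eq:tech3} and in the paper's estimate of the range $n\ge k+1$. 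As written, your argument invokes \eqref{eq:h1} only through \eqref{eq:vg2} for $S_0$, so the upper bound for $S_j$ is incomplete; once the tail estimate is rerouted through \eqref{eq:h1}, the argument closes and coincides with the paper's proof.
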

\begin{proof}
Denote $e_0(z)=1$,
    $$
     e_n(z)=\frac{z^n}{\|z^n\|_{\Dv}}=\frac{z^n}{\sqrt{2n^2 v_{2n-1}}},\quad n\in\N \backslash \{0\},
    $$
and consider the basis $\left\{e_n\right\}_{n\in\N}$ of $\Dv$. If
$g(z)=\sum_0^{\infty} \go(k)z^k\in H(\D)$,
 since $\hg(e_0)(z)=\frac{g(z)-g(0)}{z}$, by \eqref{eq:emdv} $$\left\|\hg(e_0)\right\|^2_{\Dv}\asymp \|g-g(0)\|^2_\Dv\lesssim \| g-g(0)\|^2_{ \B(2,2)}.$$
\par On the other hand,
    \begin{equation}\begin{split}\label{eqp2}
    \left\|\hg(e_n)\right\|^2_{\Dv} &=\frac{|\go(1)|^2}{2(n+1)^2n^2 v_{2n-1}}
    \\ & +
\frac{1}{2n^2 v_{2n-1}}\sum_{k=1}^{\infty}
\frac{k^2\widetilde{g_{k+1}}}{(n+k+1)^2 }v_{2k-1},\quad n\in\N.
    \end{split}\end{equation}

Clearly,
    \begin{equation*}\begin{split}
    \sum_{n=1}^{\infty}
    \frac{1}{n^2(n+k+1)^2v_{2n-1}}
    &\gtrsim \frac{1}{v_{2k+1}}\sum_{m=1}^\infty \sum_{n=m(k+1)}^{(m+1)(k+1)} \frac{1}{n^2(n+k+1)^2}
    \\ &\asymp
    \frac{1}{(k+1)^3v_{2k+1}},\quad k\in\N.
    \end{split}\end{equation*}
The opposite inequality also holds, since $v\in\DD$,  from
Lemma~\ref{Lemma:weights-in-D-hat}(vi) and \eqref{eq:minftyDv}, we
get
    \begin{equation*}\begin{split}
    \sum_{n=1}^{k}
    \frac{1}{n^2(n+k+1)^2v_{2n-1}} &\asymp \frac{1}{(k+1)^2}\sum_{n=1}^{k}
    \frac{1}{\vg\left(1-\frac{1}{2n-1}\right)}\int_{1-\frac{1}{n}}^{1-\frac{1}{(n+1)}} \,ds
     \\ &\asymp \frac{1}{(k+1)^2}\int_0^{1-\frac{1}{(k+1)}}\frac{1}{\vg\left(s\right)} \,ds
     \\ & \lesssim  \frac{1}{(k+1)^2\int_{1-\frac{1}{(k+1)}}^{1-\frac{1}{(k+2)}}\Vg_2(s)\,ds}
\asymp  \frac{1}{(k+1)^3v_{2k}}
    ,\quad k\in\N.
    \end{split}\end{equation*}
For the rest of the values of $n$, using again
Lemma~\ref{Lemma:weights-in-D-hat}, and \eqref{eq:h1}
    \begin{equation*}
    \begin{split}
    \sum_{n=k+1}^{\infty}\frac{1}{n^2(n+k+1)^2v_{2n-1}} &\asymp \sum_{n=k+1}^{\infty}\frac{1}{n^2v_{2n-1}} \int_{1-\frac{1}{n}}^{1-\frac{1}{(n+1)}} \,ds \asymp \int_{1-\frac{1}{k+1}}^{1} \frac{1}{\Vg_2(s)}\,ds
    \\ & \lesssim \frac{1}{\int_0^{1-\frac{1}{k+1}} \Vg_4(s)\,ds} \lesssim \frac{1}{(k+1)^3v_{2k}}
    ,\quad k\in\N.
\end{split}\end{equation*}

From here, using Lemma~\ref{Lemma:weights-in-D-hat}(iv) and
\eqref{eqp2}, we deduce
    \begin{equation*}\begin{split}
    \sum_{n=1}^{\infty}\left\|\hg(e_n)\right\|^2_{\Dv} & =
    |\go(1)|^2\sum_{n=1}^{\infty}\frac{1}{2(n+1)^2n^2 v_{2n-1}}
\\ & +\sum_{k=1}^{\infty}
k^2\widetilde{g_{k+1}}
v_{2k-1}\sum_{n=1}^{\infty}\frac{1}{2n^2(n+k+1)^2 v_{2n-1}}
\\ & \asymp |\go(1)|^2+\sum_{k=1}^{\infty} (k+1) |\hat{g}(k+1)|^2\asymp
\|g-g(0)\|_{\B(2,2)},
   \end{split}\end{equation*}
proving our assertion.
\end{proof}

\subsection{Proof of Theorem~\ref{th:mainhg}. $\mathbf{(ii)\Rightarrow(i)}$. }
\par In order to obtain this implication in Theorem~\ref{th:mainhg} we use results from \cite{OFStudia94} on complex interpolation
 for the mixed norm spaces $\B(2,p)$ and from \cite[Theorem $2.6$]{Zhu} for Schatten classes.
Given $(X_0,X_1)$ a compatible pair of Banach spaces, we  denote by
$\left(X_0,X_1\right)_{[\theta]}$ the complex interpolating space of
exponent $\theta\in [0,1]$. With the notation from \cite{OFStudia94}
for $D$ and $A^{p,q}_{\delta,k}$, if one chooses the particular case
$D=\D$ in \cite{OFStudia94}, then $\B(2,q)=A^{2,q}_{1,1}$. In this
way, the following result on complex interpolation on the mixed norm
space $\B(2,q)$ is a consequence of \cite[Theorem~3.1]{OFStudia94}
(see also \cite[Theorem~B]{OFStudia94}).
\begin{lettertheorem}\label{th:OFinterpolation}
Let $0<q_0<q_1\le \infty$ and $\theta\in (0,1)$. If
$\frac{1}{q}=\frac{1-\theta}{q_0}+ \frac{\theta}{q_1},$
then
$$\left(\B(2,q_0), \B(2,q_1)\right)_{[\theta]}=\B(2,q).$$
\end{lettertheorem}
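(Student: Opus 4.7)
The plan is to realize $\B(2,q)$ as a retract of a weighted vector-valued $\ell^q$-sum of Hilbert spaces whose weights and fibers are both independent of $q$, and then appeal to the standard complex interpolation identity for such sums. The key structural input is the dyadic block characterization already recorded in the excerpt,
\[
\|g\|^q_{\B(2,q)} \asymp |\widehat{g}(0)|^q + \sum_{n=0}^{\infty} 2^{-nq/2}\bigg(\sum_{k\in I(n)} \widetilde{g_k}\bigg)^{q/2}.
\]
Let $H^2_n$ denote the finite-dimensional Hilbert subspace of $H^2$ spanned by $\{z^k : k\in I(n)\}$ (with the $H^2$ inner product) and let $P_n$ be the Taylor projection onto it. Writing $\omega(n)=2^{-n/2}$, the dyadic expansion map
\[
J\colon g \longmapsto \bigl(\widehat{g}(0),\,(P_n g)_{n\geq 0}\bigr)
\]
embeds $\B(2,q)$ isomorphically into $\C\oplus \ell^q(\omega;(H^2_n))$, while the formal summation $Q\colon (c,(h_n)) \mapsto c + \sum_n h_n$ is a bounded left inverse for every $q\in (0,\infty]$ thanks to the disjointness of the Taylor spectra of the blocks $P_n g$. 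This exhibits $\B(2,q)$ as a retract of $\C\oplus \ell^q(\omega;(H^2_n))$ simultaneously at both endpoints $q=q_0$ and $q=q_1$.

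Since complex interpolation commutes with retracts (Bergh--L\"ofstr\"om, Theorem~6.4.2), it now suffices to verify the identity
\[
\bigl(\ell^{q_0}(\omega;(X_n)),\,\ell^{q_1}(\omega;(X_n))\bigr)_{[\theta]} = \ell^q(\omega;(X_n)), \qquad \frac{1}{q} = \frac{1-\theta}{q_0} + \frac{\theta}{q_1},
\]
for a fixed sequence of Banach spaces. For $q_0,q_1<\infty$ this is a classical Calder\'on-type interpolation theorem (Bergh--L\"ofstr\"om, Theorem~5.6.3, applied to the discrete measure with masses $\omega(n)^{q_j}$). Pulling back through $(J,Q)$ then yields Theorem~A in the finite-exponent range.

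The main obstacle is the endpoint $q_1 = \infty$, where the $\ell^\infty$-type sums are neither separable nor reflexive, polynomials are not dense in $\B(2,\infty)$, and the complex method can a priori recover only a proper subspace of the target. I would handle this by first proving $(\B(2,q_0),\, b(2,\infty))_{[\theta]} = \B(2,q)$ via the reduction $(\ell^{q_0}(\omega;(X_n)),\, c_0(\omega;(X_n)))_{[\theta]} = \ell^q(\omega;(X_n))$, where polynomials are dense and the classical theory applies cleanly, and then invoking the general principle that the complex interpolation functor depends only on the norm closure of the intersection $\B(2,q_0)\cap\B(2,\infty)$. Since $\B(2,q_0)\subset b(2,\infty)$ for any finite $q_0$ (a tail consequence of the convergence of $\sum_n 2^{-nq_0/2}\bigl(\sum_{k\in I(n)}\widetilde{g_k}\bigr)^{q_0/2}$), that closure is unaffected by replacing $\B(2,\infty)$ with $b(2,\infty)$, and the endpoint interpolation principle then identifies $(\B(2,q_0),\,\B(2,\infty))_{[\theta]}$ with $\B(2,q)$.
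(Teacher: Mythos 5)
Your argument is correct in substance but takes a genuinely different route from the paper: the paper does not prove Theorem~\ref{th:OFinterpolation} at all, it obtains it by identifying $\B(2,q)$ with the mixed norm space $A^{2,q}_{1,1}$ of Ortega--F\`abrega on $D=\D$ and quoting their Theorem~3.1, whereas you give a self-contained proof by exhibiting $\B(2,q)$ as a retract of a weighted $\ell^q$-sum of fixed Hilbert blocks and then invoking classical interpolation of vector-valued sequence spaces. Your approach buys transparency: it isolates exactly which classical inputs are needed (interpolation of retracts, Calder\'on's theorem for $\ell^q$-sums with common fibers, and the reduction of the $\infty$-endpoint to the closure of the intersection), and it explains structurally why the $q_1=\infty$ case forces the passage through $b(2,\infty)$. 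The citation in the paper buys brevity and covers the full quasi-Banach range $0<q_0<1$ in the statement, which your appeal to Bergh--L\"ofstr\"om does not, since their complex method is set up for Banach couples; only $q_0=2$, $q_1=\infty$ is used later in the paper, so this restriction is harmless for the application but should be acknowledged if the theorem is wanted as stated. Two points to tighten. First, a normalization slip: with the fibers $H^2_n$ carrying the $H^2$ norm one has $\sum_{k\in I(n)}\widetilde{g_k}\asymp 2^{2n}\|P_ng\|^2_{H^2}$, so the block characterization reads $\|g\|^q_{\B(2,q)}\asymp|\widehat{g}(0)|^q+\sum_n\bigl(2^{n/2}\|P_ng\|_{H^2}\bigr)^q$ and your weight must be $\omega(n)=2^{n/2}$ rather than $2^{-n/2}$ (equivalently, keep $2^{-n/2}$ but norm the $n$-th fiber by $\bigl(\sum_{k\in I(n)}k^2|\widehat{g}(k)|^2\bigr)^{1/2}$); the fibers are still uniformly Hilbertian and independent of $q$, so nothing downstream changes. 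Second, the endpoint reduction should be pinned to a precise statement: the principle you invoke is that $(A_0,A_1)_{[\theta]}=(A_0^\circ,A_1^\circ)_{[\theta]}$, where $A_j^\circ$ denotes the closure of $A_0\cap A_1$ in $A_j$ (Bergh--L\"ofstr\"om, Theorem~4.2.2), together with the identification of the closure of $\B(2,q_0)$ in $\B(2,\infty)$ as exactly $b(2,\infty)$, which follows since $\B(2,q_0)\subset b(2,\infty)$, $b(2,\infty)$ is closed in $\B(2,\infty)$, and polynomials are dense in $b(2,\infty)$. With those two repairs your proof is complete and independent of the Ortega--F\`abrega machinery.
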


\begin{proposition}\label{p>2suf}
Let $2<p<\infty$
and $v\in \DD$ satisfying  \eqref{eq:minftyDv} and \eqref{eq:h1}.
If $g\in\B(2,p)$,
 then $\hg\in S_p(\Dv)$ and
$\| \hg\|_{ S_p(\Dv)}\lesssim \| g-g(0)\|_{ \B(2,p)}.$
\end{proposition}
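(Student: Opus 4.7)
The strategy is to derive this implication by complex interpolation between the two endpoint cases just established, $p=2$ (Proposition~\ref{p=2}) and $p=\infty$ (Proposition~\ref{p=infty}). This is precisely the route announced in the paragraph introducing Theorem~\ref{th:OFinterpolation}, and it reduces the work for $p \in (2,\infty)$ to purely formal considerations.

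The first step is to view $T : g \mapsto \hg$ as a single linear map. Linearity is immediate because the kernel $g'(tz)$ in \eqref{H-g} depends linearly on $g$. Moreover $T(g) = T(g - g(0))$, so I would restrict $T$ to the closed subspace of $g \in H(\D)$ with $g(0) = 0$; on this subspace, $\|g\|_{\B(2,q)} = \|g - g(0)\|_{\B(2,q)}$. Both $\B(2,2)$ and $\B(2,\infty)$ embed continuously into $H(\D)$ with its Fr\'echet topology, so they form a compatible couple in the sense of complex interpolation; likewise $S_2(\Dv)$ and $S_\infty(\Dv)$ both sit inside the algebra of bounded operators on $\Dv$ (with $S_\infty$ identified with the space of bounded operators itself).

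The two endpoint estimates then read: $T : \B(2,2) \to S_2(\Dv)$ is bounded by Proposition~\ref{p=2}, and $T : \B(2,\infty) \to S_\infty(\Dv)$ is bounded by Proposition~\ref{p=infty}. For $\theta = 1 - 2/p \in (0,1)$, so that $\tfrac{1}{p} = \tfrac{1-\theta}{2} + \tfrac{\theta}{\infty}$, Theorem~\ref{th:OFinterpolation} identifies
\begin{equation*}
\left(\B(2,2),\,\B(2,\infty)\right)_{[\theta]} = \B(2,p),
\end{equation*}
while \cite[Theorem~2.6]{Zhu} on complex interpolation of Schatten ideals on a Hilbert space identifies
\begin{equation*}
\left(S_2(\Dv),\,S_\infty(\Dv)\right)_{[\theta]} = S_p(\Dv).
\end{equation*}

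The abstract complex interpolation theorem for linear operators now delivers the bounded extension $T : \B(2,p) \to S_p(\Dv)$, with operator norm dominated by the geometric mean of the two endpoint norms. Evaluating at $g - g(0)$ produces both $\hg \in S_p(\Dv)$ and the quantitative estimate $\|\hg\|_{S_p(\Dv)} \lesssim \|g - g(0)\|_{\B(2,p)}$. The only point requiring genuine care is the formal verification that the two pairs above are compatible couples to which the abstract interpolation machinery applies; the actual identification of the interpolation spaces is quoted wholesale from~\cite{OFStudia94} and~\cite{Zhu}, so no fresh hands-on estimate on $g$ is needed at this stage.
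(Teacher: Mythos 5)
Your proposal is correct and follows essentially the same route as the paper: the authors likewise define $T(g)=\hg$, invoke Propositions~\ref{p=2} and~\ref{p=infty} as the endpoint bounds, and combine the abstract complex interpolation theorem (\cite[Theorem~4.1.2]{BF76}) with Theorem~\ref{th:OFinterpolation} and \cite[Theorem~2.6]{Zhu} to conclude that $T:\B(2,p)\to S_p(\Dv)$ is bounded. Your additional remarks on restricting to $g(0)=0$ and on verifying that the two pairs form compatible couples are sensible housekeeping that the paper leaves implicit.
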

\begin{proof}
Let us consider the linear operator $T(g)=\hg$. By
Proposition~\ref{p=2} the operator $T$ is bounded from $\B(2,2)$ to
$S_2(\Dv)$ with $$\| T(g)\|_{S_2(\Dv)}=\| \hg\|_{S_2(\Dv)}\asymp
\|g-g(0)\|_{\B(2,2)}.$$ Analogously, by Proposition~\ref{p=infty},
$T$ is bounded from $\B(2,\infty)$ to $S_\infty(\Dv)$ with $$\|
T(g)\|_{S_\infty(\Dv)}=\| \hg\|_{S_\infty(\Dv)}\asymp
\|g-g(0)\|_{\B(2,\infty)}.$$ So, the previous inequalities  together
with \cite[Theorem~$4.1.2$, p.~$88$]{BF76},
Theorem~\ref{th:OFinterpolation} and \cite[Theorem~$2.6$]{Zhu} imply
that  $T: \B(2,p)\to S_p(\Dv)$ is bounded.   This finishes the
proof.
\end{proof}

In order to deal with the case $0<p<2$, we will need two technical
lemmas.

\begin{lemma}\label{le:tech}
Let $v\in\DD$  satisfying \eqref{eq:h1}. Then, for any $q>0$ there is a constant $C(q,v)$ such that
$$\sum_{n=k}^\infty 2^{-3qn}v^{-q}_{2^{n+1}}\le C(q,v) 2^{-3qk}v^{-q}_{2^{k+1}}$$
for all $k\in\N\cup\{0\}.$
\end{lemma}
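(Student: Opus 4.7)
The plan is to reduce the series to a convergent geometric one. By Lemma~\ref{Lemma:weights-in-D-hat}(vi), $v_{2^{n+1}}\asymp\vg(\rho_n)$ with $\rho_n:=1-2^{-(n+1)}$, so it suffices to establish
$$
\sum_{n=k}^\infty 2^{-3qn}\vg(\rho_n)^{-q}\lesssim 2^{-3qk}\vg(\rho_k)^{-q}.
$$
Next, Lemma~\ref{Lemma:weights-in-D-hat}(ii) applied with $r=\rho_k$ and $t=\rho_n$ (for $n\ge k$) yields $\vg(\rho_n)\ge c\cdot 2^{-\beta(n-k)}\vg(\rho_k)$ for some $\beta=\beta(v)>0$. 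Substituting and summing,
$$
\sum_{n=k}^\infty 2^{-3qn}\vg(\rho_n)^{-q}\le C^q\cdot 2^{-3qk}\vg(\rho_k)^{-q}\sum_{m=0}^\infty 2^{q(\beta-3)m},
$$
and this geometric series converges exactly when $\beta<3$.

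The main obstacle is therefore to show that $\beta$ in Lemma~\ref{Lemma:weights-in-D-hat}(ii) may be chosen strictly less than $3$ under \eqref{eq:h1}. This is a self-improvement (``openness'') property of the Muckenhoupt-type condition. A first consequence of \eqref{eq:h1}, combined with the elementary estimate
$$
\int_0^r \frac{\vg(s)}{(1-s)^4}\,ds\ge \vg(r)\int_0^r \frac{ds}{(1-s)^4}\gtrsim \frac{\vg(r)}{(1-r)^3},\qquad r\ge\tfrac12,
$$
(which uses only that $\vg$ is decreasing) is the tail estimate $\int_r^1(1-s)^2/\vg(s)\,ds\lesssim(1-r)^3/\vg(r)$, ruling out $\vg(s)\asymp(1-s)^3$. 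Leveraging this to upgrade the pointwise decay encoded in Lemma~\ref{Lemma:weights-in-D-hat}(ii) from $\beta\le 3$ to strictly $\beta<3$ is the technical core: one pairs the tail estimate with the equivalent Lemma~\ref{Lemma:weights-in-D-hat}(iv) and iterates, arguing that any pair $(r,t)$ saturating $\beta=3$ along dyadic scales would force $(1-s)^2/\vg(s)\gtrsim(1-s)^{-1}$ on the corresponding intervals, leading to logarithmic divergence of the tail integral, a contradiction.

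A shorter route handles $q\ge 1$: setting $b_n=2^{-3n}/v_{2^{n+1}}$, the $q=1$ case (which, via the dyadic-to-integral comparison, is exactly the tail estimate above at $r=\rho_k$) gives $\sum_{n\ge k}b_n\lesssim b_k$. The monotonicity inequality $\sum b_n^q\le(\sum b_n)^q$, valid for $q\ge 1$, then immediately concludes the argument. The remaining case $0<q<1$ seems to require the self-improvement above.
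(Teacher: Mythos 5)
Your reduction is fine: with $\rho_n=1-2^{-(n+1)}$, Lemma~\ref{Lemma:weights-in-D-hat}(vi) turns the claim into geometric summability of $2^{-3qn}\vg(\rho_n)^{-q}$, and this follows once one knows $\vg(\rho_n)\gtrsim 2^{-\beta(n-k)}\vg(\rho_k)$ for some $\beta<3$. The genuine gap is that this self-improvement --- which you yourself flag as ``the technical core'' --- is never actually proved. You correctly extract from \eqref{eq:h1} the tail estimate $T(r):=\int_r^1(1-s)^2\vg(s)^{-1}\,ds\lesssim (1-r)^3/\vg(r)$, but the passage from there to a \emph{uniform} exponent $\beta<3$ in Lemma~\ref{Lemma:weights-in-D-hat}(ii) is only narrated (``pairs the tail estimate with (iv) and iterates\dots{} saturation of $\beta=3$ forces logarithmic divergence''); as stated this is a per-pair contradiction heuristic and does not produce the uniform $\beta$ and uniform constant that your geometric series requires. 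The missing step has a short direct proof, and it is exactly what the paper does (its inequality \eqref{eq:tech1}): since $\vg$ is decreasing and $v\in\DD$, one has $T(r)-T(\tfrac{1+r}{2})=\int_r^{(1+r)/2}(1-s)^2\vg(s)^{-1}\,ds\gtrsim (1-r)^3/\vg(r)\gtrsim \bigl(1-\tfrac{1+r}{2}\bigr)^3/\vg(\tfrac{1+r}{2})\gtrsim T(\tfrac{1+r}{2})$, the last step being your tail estimate applied at the point $\tfrac{1+r}{2}$; hence $T(\tfrac{1+r}{2})\le\gamma T(r)$ for some $\gamma=\gamma(v)<1$. Iterating this and using $T(r)\asymp(1-r)^3/\vg(r)$ yields $\vg(\rho_n)\gtrsim 2^{-(3+\log_2\gamma)(n-k)}\vg(\rho_k)$, which is precisely your $\beta=3+\log_2\gamma<3$. (The paper sums the series slightly differently, via $2^{-3n}v_{2^{n+1}}^{-1}\lesssim T(\rho_n)\le\gamma^{n-k}T(\rho_k)\lesssim\gamma^{n-k}2^{-3k}v_{2^{k+1}}^{-1}$, but the mechanism is the same.)

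Two further remarks. Your shortcut for $q\ge1$ via $\sum_n b_n^q\le\bigl(\sum_n b_n\bigr)^q$ is correct, and the identification of the $q=1$ case with the tail estimate is accurate; however, in this paper the lemma is invoked in Proposition~\ref{pr:sufp<2} with $q=p/2$ and $0<p<2$, i.e.\ precisely in the range $0<q<1$ that the shortcut does not cover, so the self-improvement step cannot be avoided. Once you insert the argument above for \eqref{eq:tech1}, your proof closes and is essentially the paper's.
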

\begin{proof}
First, we prove that there exists $\gamma=\gamma(v)\in (0,1)$ such that
\begin{equation}\begin{split}\label{eq:tech1}
\int_{\frac{1+r}{2}}^1\frac{(1-s)^2}{\vg(s)}\,ds\le \gamma \int_{r}^{1}\frac{(1-s)^2}{\vg(s)}\,ds,\quad 0<r<1.
\end{split}\end{equation}
Taking into account that $v\in\DD$ and \eqref{eq:h1}
\begin{equation}\begin{split}\label{eq:tech3}
\int_{\frac{1+r}{2}}^1\frac{(1-s)^2}{\vg(s)}\,ds
& \le C(v) \left( \int_{0}^{\frac{1+r}{2}}\Vg_4(s)\,ds \right)^{-1}
\\ & \le C(v) \frac{(1-r)^3}{\vg\left(\frac{1+r}{2}\right)} \le C(v) \int_{r}^{\frac{1+r}{2}}\frac{(1-s)^2}{\vg(s)}\,ds,\quad
0<r<1,
\end{split}\end{equation}
which is equivalent to \eqref{eq:tech1}. Then,
Lemma~\ref{Lemma:weights-in-D-hat} and \eqref{eq:tech3} yield
\begin{equation*}\begin{split}
\sum_{n=k}^\infty  2^{-3qn}v^{-q}_{2^{n+1}}
 &\le C(q,v) \sum_{n=k}^\infty \left(\frac{1}{\vg\left(1-\frac{1}{2^{n+1}}\right)}\int_{1-\frac{1}{2^{n+1}}}^1(1-s)^2\,ds\right)^q
 \\ & \le C(q,v) \left(\int_{1-\frac{1}{2^{k+1}}}^1\frac{(1-s)^2}{\vg(s)}\,ds\right)^q  \sum_{n=k}^\infty
 \gamma^{n-k}.
\end{split}\end{equation*}
Since the last sum is convergent, all of the above is controlled by
\begin{equation*}C(q,v)
\left(\frac{1}{2^{3k}\vg\left(1-\frac{1}{2^{k+1}}\right)}\right)^q
\le C(q,v)\left(\frac{1}{2^{3k}v_{2^{k+1}}}\right)^q, \quad
k\in\N\cup\{0\}.
\end{equation*}
\end{proof}
\begin{lemma}\label{le:tech2}
Let $v\in\DD$  satisfying \eqref{eq:minftyDv}. Then, for any $q>0$ there is a constant $C(q,v)$ such that
$$\sum_{n=1}^k 2^{-qn}v^{-q}_{2^{n+1}}\le C(q,v) 2^{-qk}v^{-q}_{2^{k+1}}$$
for all $k\in\N.$
\end{lemma}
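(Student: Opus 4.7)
The plan is to mimic the proof of Lemma~\ref{le:tech}, using \eqref{eq:minftyDv} in place of \eqref{eq:h1}. Set $r_n = 1 - 2^{-(n+1)}$ and $F(r) = \int_0^r \frac{ds}{\vg(s)}$. By Lemma~\ref{Lemma:weights-in-D-hat}(vi) together with $v\in\DD$, one has $v_{2^{n+1}} \asymp \vg(r_n)$, and hence
$$\frac{2^{-n}}{v_{2^{n+1}}} \asymp \frac{r_n - r_{n-1}}{\vg(r_n)} \asymp F(r_n) - F(r_{n-1}),$$
so the quantity to be bounded is comparable to $\sum_{m=1}^k [F(r_m)-F(r_{m-1})]^q$.

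The key step will be to prove the pointwise upper bound
$$F(r) \le C(v)\,\frac{1-r}{\vg(r)}, \quad 0 < r < 1.$$
To establish it, I would estimate the first factor of \eqref{eq:minftyDv} from below by restricting the integration to $[r,(1+r)/2]$: since $\vg$ is decreasing and $v\in\DD$ gives $\vg((1+r)/2) \gtrsim \vg(r)$,
$$\int_r^1 \frac{\vg(s)}{(1-s)^2}\,ds \ge \vg\!\left(\tfrac{1+r}{2}\right)\int_r^{(1+r)/2}\frac{ds}{(1-s)^2} = \frac{\vg((1+r)/2)}{1-r} \gtrsim \frac{\vg(r)}{1-r}.$$
Substituting back into \eqref{eq:minftyDv} delivers the claimed bound on $F(r)$.

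Applying this bound at $r_n$ and combining with the elementary lower estimate $F(r_n) - F(r_{n-1}) \ge (r_n-r_{n-1})/\vg(r_{n-1}) \gtrsim (1-r_n)/\vg(r_n)$ (using doubling once more) yields $F(r_n) - F(r_{n-1}) \ge c(v)\, F(r_n)$ for some constant $c(v) > 0$, equivalently $F(r_{n-1}) \le \gamma F(r_n)$ with $\gamma = 1-c(v) \in (0,1)$. Iterating gives $F(r_m) \le \gamma^{n-m} F(r_n)$ for all $m \le n$, and therefore
$$\frac{2^{-m}}{v_{2^{m+1}}} \asymp F(r_m) - F(r_{m-1}) \le F(r_m) \le \gamma^{k-m} F(r_k) \lesssim \gamma^{k-m}\,\frac{2^{-k}}{v_{2^{k+1}}}.$$
Raising to the $q$-th power and summing the geometric series $\sum_{m=1}^k \gamma^{q(k-m)} \le (1-\gamma^q)^{-1}$ produces the desired inequality with constant $C(q,v)$.

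The main observation --- and essentially the only place where \eqref{eq:minftyDv} genuinely enters --- is the pointwise bound $F(r) \lesssim (1-r)/\vg(r)$. Once this reverse-doubling-type estimate on $F$ is in place, the remainder of the proof is a mechanical analogue of the passage \eqref{eq:tech1}--\eqref{eq:tech3} in Lemma~\ref{le:tech}.
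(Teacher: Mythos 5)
Your proof is correct, and it is essentially the paper's argument run on the \emph{dual} factor of the Muckenhoupt product. The paper's proof of Lemma~\ref{le:tech2} telescopes $G(r)=\int_r^1\Vg_2(s)\,ds$: it extracts from \eqref{eq:minftyDv} and the doubling of $\vg$ the pointwise bound $G\bigl(\tfrac{1+r}{2}\bigr)\le C\,\vg(r)/(1-r)$ (this is \eqref{eq:tech4}), deduces the reverse-doubling inequality \eqref{eq:tech5}, identifies $2^{-n}v_{2^{n+1}}^{-1}$ with $G(r_n)^{-1}$ up to constants, and sums a geometric series. You instead telescope $F(r)=\int_0^r ds/\vg(s)$: your key pointwise bound $F(r)\le C(1-r)/\vg(r)$ is the exact mirror image, obtained from \eqref{eq:minftyDv} by bounding the \emph{other} factor from below over $[r,\tfrac{1+r}{2}]$ and invoking doubling, and it yields the quasi-geometric growth $F(r_{n-1})\le\gamma F(r_n)$ in place of \eqref{eq:tech5}. (Your worry about $\gamma=1-c(v)$ lying in $(0,1)$ is moot: since $F(r_{n-1})>0$, the inequality $F(r_n)-F(r_{n-1})\ge c\,F(r_n)$ forces $c<1$.) The two routes use identical ingredients and cost the same effort; the only thing yours buys is that the bound $F(r)\lesssim(1-r)/\vg(r)$ is arguably the most transparent way to see where \eqref{eq:minftyDv} enters --- and indeed it is precisely the estimate the paper itself derives in the proof of Theorem~\ref{th:hilbertop} when showing $M_3(v)\le CM_1(v)$, so your lemma could even be quoted from there.
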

\begin{proof}
Now, we prove that there exists $\gamma=\gamma(v)\in (0,1)$ such
that
\begin{equation}\begin{split}\label{eq:tech5}
\int_{\frac{1+r}{2}}^1\Vg_2(s)\,ds \le \gamma \int_{r}^{1}\Vg_2(s)\,ds,\quad 0<r<1.
\end{split}\end{equation}
By \eqref{eq:minftyDv}
\begin{equation}\begin{split}\label{eq:tech4}
&\int_{\frac{1+r}{2}}^1\Vg_2(s)\,ds
\le \frac{C(v)}{ \int_{0}^{\frac{1+r}{2}}\frac{1}{\vg(s)}\,ds}
 \le \frac{C(v)}{ \int_{r}^{\frac{1+r}{2}}\frac{1}{\vg(s)}\,ds}
\\ & \le C(v)\frac{\vg\left(r\right)}{1-r}
\le C(v) \int_{r}^{\frac{1+r}{2}}\Vg_2(s)\,ds ,\quad 0<r<1,
\end{split}\end{equation}
which is equivalent to \eqref{eq:tech5}.
So, by \eqref{eq:tech4} and Lemma~\ref{Lemma:weights-in-D-hat}
\begin{equation*}\begin{split}
\sum_{n=1}^k 2^{-qn}v^{-q}_{2^{n+1}}
&\le C(q,v) \sum_{n=1}^k 2^{-qn}\vg\left(1-\frac{1}{2^{n+1}}\right)^{-q}
\\ &\le C(q,v) \sum_{n=1}^k \left( \int_{1-\frac{1}{2^{n+1}}}^1\Vg_2(s)\,ds \right)^{-q}
\\ &\le C(q,v) \left( \int_{1-\frac{1}{2^{k+1}}}^1\Vg_2(s)\,ds \right)^{-q} \sum_{n=1}^k\gamma^{k-n}
\\ &\le C(q,v) \left( \int_{1-\frac{1}{2^{k+1}}}^{1-\frac{1}{2^{k+2}}}\Vg_2(s)\,ds \right)^{-q}
\\ &\le C(q,v) 2^{-qk}\vg\left(1-\frac{1}{2^{k+1}}\right)^{-q} \le C(q,v) 2^{-qn}v^{-q}_{2^{k+1}},\quad k\in\N.
\end{split}\end{equation*}
\end{proof}

Now we are ready to prove the remaining case of (ii)$\Rightarrow$(i)
in Theorem~\ref{th:mainhg}.

\begin{proposition}\label{pr:sufp<2}
Let $v\in \DD$ satisfying \eqref{eq:minftyDv} and \eqref{eq:h1}. If
$0<p<2$ and $g\in\B(2,p)$, then $\hg\in \mathcal{S}_p(\Dv)$ and $\|
\hg\|_{ S_p(\Dv)}\lesssim \| g-g(0)\|_{ \B(2,p)}.$
\end{proposition}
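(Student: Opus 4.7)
The plan is to dyadically decompose $g$, exploit the orthogonality of ranges this induces on $\hg$, and reduce the Schatten estimate to a block-wise bound via an operator-concavity trace inequality. Writing $g-g(0)=\sum_{m\ge0}g_m$ with $g_m(z)=\sum_{k\in I(m)}\widehat{g}(k)z^k$, linearity gives $\hg=\sum_m\mathcal{H}_{g_m}$. Since $\mathcal{H}_{g_m}(f)$ lies in $W_m:=\mathrm{span}\{z^{k-1}:k\in I(m)\}$ and the $W_m$ are pairwise orthogonal in $\Dv$, a direct computation shows $\mathcal{H}_{g_m}^*\mathcal{H}_{g_{m'}}=0$ for $m\ne m'$, so $\hg^*\hg=\sum_m\mathcal{H}_{g_m}^*\mathcal{H}_{g_m}$.

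Because $x\mapsto x^{p/2}$ is concave on $[0,\infty)$ with $0\mapsto 0$ for $0<p\le 2$, Rotfel'd's trace inequality yields
\begin{equation*}
\|\hg\|_{S_p(\Dv)}^p=\operatorname{tr}\bigl((\hg^*\hg)^{p/2}\bigr)\le\sum_m\operatorname{tr}\bigl((\mathcal{H}_{g_m}^*\mathcal{H}_{g_m})^{p/2}\bigr)=\sum_m\|\mathcal{H}_{g_m}\|_{S_p(\Dv)}^p.
\end{equation*}
Hence it suffices to establish the block-wise bound $\|\mathcal{H}_{g_m}\|_{S_p(\Dv)}^p\lesssim 2^{-mp/2}G_m^{p/2}$ with $G_m:=\sum_{k\in I(m)}\widetilde{g_k}$; summing over $m$ then recovers exactly $\|g-g(0)\|_{\B(2,p)}^p$ in view of the dyadic norm identity $\|g\|_{\B(2,p)}^p\asymp|\widehat g(0)|^p+\sum_m 2^{-mp/2}G_m^{p/2}$ recorded at the start of Section~3.

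The hard part is the block estimate when $p<2$. Proposition~\ref{p=2} applied to $g_m$ gives $\|\mathcal{H}_{g_m}\|_{S_2}\asymp 2^{-m/2}G_m^{1/2}$, and Proposition~\ref{p=infty} applied to $g_m$ gives the matching $\|\mathcal{H}_{g_m}\|_{S_\infty}\asymp 2^{-m/2}G_m^{1/2}$: the two endpoints coincide in order, which forces the top singular value of $\mathcal{H}_{g_m}$ to absorb essentially all of its $S_2$-mass, so $\mathcal{H}_{g_m}$ is effectively rank one. A naive rank bound $\|T\|_{S_p}\le(\operatorname{rank}T)^{1/p-1/2}\|T\|_{S_2}$ would lose a fatal factor $2^{m(1/p-1/2)}$; the near-rank-one structure must therefore be made quantitative. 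This is precisely where Lemmas~\ref{le:tech} and~\ref{le:tech2} come in: they convert the Muckenhoupt-type conditions~\eqref{eq:minftyDv} and~\eqref{eq:h1} into geometric decay of the weighted dyadic sums $\sum 2^{-qn}v_{2^{n+1}}^{-q}$, which in turn controls the off-diagonal entries and subdominant eigenvalues of the Gram matrix $(\langle\phi_k,\phi_{k'}\rangle)_{k,k'\in I(m)}$ of the $\Dv$-representers $\phi_k$ of the functionals $f\mapsto\int_0^1 t^{k-1}f(t)\,dt$. Feeding this spectral control into the identity $\mathcal{H}_{g_m}^*\mathcal{H}_{g_m}=\sum_{k\in I(m)}\widetilde{g_k}\,\|z^{k-1}\|_{\Dv}^2\,\phi_k\otimes\phi_k$ produces the decay of the singular values of $\mathcal{H}_{g_m}$ needed to close the block bound.
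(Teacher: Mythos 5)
Your opening reduction is sound and is genuinely different in flavour from the paper's: the range spaces $W_m$ are indeed pairwise orthogonal in $\Dv$, so $\hg^*\hg=\sum_m\mathcal{H}_{g_m}^*\mathcal{H}_{g_m}$, and Rotfel'd's trace inequality for the concave function $x\mapsto x^{p/2}$ legitimately gives $\|\hg\|_{S_p(\Dv)}^p\le\sum_m\|\mathcal{H}_{g_m}\|_{S_p(\Dv)}^p$. This correctly reduces the proposition to the block estimate $\|\mathcal{H}_{g_m}\|_{S_p(\Dv)}^p\lesssim 2^{-mp/2}\bigl(\sum_{k\in I(m)}\widetilde{g_k}\bigr)^{p/2}$, i.e.\ to showing $\|\mathcal{H}_{g_m}\|_{S_p}\lesssim\|\mathcal{H}_{g_m}\|_{S_2}$ uniformly in $m$.

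That block estimate, however, is exactly the hard content of the proposition for $p<2$, and you do not prove it; the final paragraph is a description of what a proof would need rather than a proof. Two concrete problems. First, Lemmas~\ref{le:tech} and~\ref{le:tech2} do not say what you need them to say: they give geometric decay of $2^{-cqn}v^{-q}_{2^{n+1}}$ \emph{across} dyadic scales $n$, which in the paper controls the interaction between different blocks; they carry no information about the spectrum of the Gram matrix $\bigl(\langle\phi_k,\phi_{k'}\rangle\bigr)_{k,k'\in I(m)}$ \emph{within} a single block. Second, and more seriously, the kind of information actually available here --- two-sided order-of-magnitude bounds on $\langle\phi_k,\phi_{k'}\rangle$, which is what conditions \eqref{eq:minftyDv}, \eqref{eq:h1} and Lemma~\ref{Lemma:weights-in-D-hat} yield --- provably cannot control the subdominant eigenvalues: the $2^m\times2^m$ matrices $2^{-m}\mathbf{1}\mathbf{1}^T$ and $2^{-m}(\mathbf{1}\mathbf{1}^T+I)$ have entrywise comparable entries and comparable traces, yet the first has $\mathrm{tr}(X^{p/2})\asymp1$ and the second $\mathrm{tr}(X^{p/2})\asymp 2^{m(1-p/2)}\to\infty$. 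So "effectively rank one" would require cancellation/smoothness information about $k\mapsto\phi_k$ that nothing in your argument (or in the cited lemmas) supplies. The paper sidesteps this entirely: it applies \cite[Theorem~1.26, Corollary~1.32]{Zhu} to $\hg^*\hg$ with the block orthonormal basis $e_n=\bigl(\sum_{k+1\in I(n)}z^k\bigr)/\bigl(\sum_{k+1\in I(n)}k^2v_{2k-1}\bigr)^{1/2}$ to get $\|\hg\|_{S_p}^p\le\sum_n\|\hg e_n\|_{\Dv}^p$, computes $\|\hg e_n\|_{\Dv}^2$ explicitly in terms of Taylor coefficients, and uses Lemmas~\ref{le:tech} and~\ref{le:tech2} only to sum the cross-block contributions $J_2$ and $J_3$ --- no spectral analysis of any single block is ever required. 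To salvage your route you would need to actually prove the block bound, which is a substantial missing step.
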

\begin{proof}

Let $g(z)=\sum_{k=0}^\infty \widehat{g}(k)z^k \in\B(2,p)$.
We  use the  orthonormal basis $\{e_n\}_{n=0}^\infty$, where
\begin{equation}\label{eq:en}
 e_{0}(z)=1\quad\text{ and}\quad e_n(z)=\frac{\sum_{k+1\in I(n)}z^k}{\left(\sum_{k+1\in I(n)}k^2v_{2k-1}\right)^{1/2}},\quad n\in\N.
 \end{equation}
  Since $0<p\le 2$, $\B(2,p)\subset \B(2,2)$.
 Thus, by Proposition~\ref{p=2}, $\hg\in S_2(\Dv)$, and  in particular $\hg$ is a compact operator. Therefore, by
 \cite[Theorem~$1.26$]{Zhu} and
 \cite[Corollary~$1.32$]{Zhu} (applied to $\h_g^\star\h_g$)
 \begin{equation*}
 \|\h_g\|^p_{S^p(\Dv)}=\|\h_g^\star\h_g\|^{\frac{p}{2}}_{S^{\frac{p}{2}}(\Dv)}\le
 \sum_{n=0}^\infty\langle \hg e_n,\hg
 e_n\rangle_{\Dv}^{\frac{p}{2}}.
 \end{equation*}
Since $\hg(e_0)(z)=\frac{g(z)-g(0)}{z}$, by \eqref{eq:emdv}
$$\left\|\hg(e_0)\right\|^2_{\Dv}\asymp \|g-g(0)\|^2_\Dv\lesssim \|
g-g(0)\|^2_{ \B(2,p)}.$$ Moreover, for $n\in\N$
\begin{equation*}\begin{split}
&\hg(e_n)(z) =\sum_{j=0}^\infty(j+1)\hat{g}(j+1)\left(\int_0^1 t^je_n(t)\,dt\right)z^j
\\ & =\left(\sum_{k+1\in I(n)}(k+1)^2v_{2k+1}\right)^{-1/2}\sum_{j=0}^\infty(j+1)\hat{g}(j+1) \left(\sum_{m+1\in I(n)}\frac{1}{m+j+1} \right)z^j.
\end{split}\end{equation*}
So, it is enough to prove
 \begin{equation}\begin{split}\label{sp3}
\sum_{n=0}^\infty\langle \hg e_n,\hg e_n\rangle_{\Dv}^{\frac{p}{2}}\lesssim \|g-g(0)\|^p_{\B(2,p)}.
 \end{split}\end{equation}
By Lemma~\ref{Lemma:weights-in-D-hat}(v),  $\sum_{k+1\in
I(n)}(k+1)^2v_{2k+1}\asymp 2^{3n}v_{2^{n+1}}$, yielding
 \begin{equation}\begin{split}\label{sp2}
 &\sum_{n=1}^\infty\langle \hg e_n,\hg e_n\rangle_{\Dv}^{\frac{p}{2}}
 \lesssim \sum_{n=1}^\infty2^{-\frac{3pn}{2}}v_{2^{n+1}}^{-\frac{p}{2}}
 \left|\go(1)\sum_{m+1\in I(n)}\frac{1}{m+1} \right|^{p}
\\ & + \sum_{n=1}^\infty2^{-\frac{3pn}{2}}v_{2^{n+1}}^{-\frac{p}{2}}\left( \sum_{j=1}^\infty j^2\widetilde{g_{j+1}} \left(\sum_{m+1\in I(n)}\frac{1}{m+j+1} \right)^2v_{2j-1}
 \right)^{\frac{p}{2}}
 \\ & \le J_1+J_2+J_3,
 \end{split}\end{equation}
 where $J_1$ is the first sum on the right hand side, and the second
 sum is decomposed in $j \le 2^{n+1}-1$ ($J_2$) and $j \ge 2^{n+1}$
 ($J_3$).
By Lemma~\ref{le:tech},
\begin{equation}\begin{split}\label{sp2n}
 J_1 \lesssim |\go(1)|^p \sum_{n=1}^\infty2^{-\frac{3pn}{2}}v_{2^{n+1}}^{-\frac{p}{2}}\lesssim
\|g-g(0)\|^p_{\B(2,p)}.
 \end{split}\end{equation}
We now estimate for $J_2$, which satisfies
\begin{equation}\label{sp3n}
J_2 \asymp
\sum_{n=1}^\infty2^{-\frac{3pn}{2}}v_{2^{n+1}}^{-\frac{p}{2}}\left(
  \sum_{k=0}^n
  \sum_{j\in I(k)} j^2\widetilde{g_{j+1}} v_{2j-1}
 \right)^{\frac{p}{2}}.
\end{equation}
 Using
Lemma~\ref{Lemma:weights-in-D-hat} and Lemma~\ref{le:tech}, the
above can be bounded by
  \begin{equation}\begin{split}\label{sp4}
&\sum_{n=1}^\infty2^{-\frac{3pn}{2}}v_{2^{n+1}}^{-\frac{p}{2}}\left(
  \sum_{k=0}^n2^{2k}v_{2^{k+1}}
  \sum_{j\in I(k)} \widetilde{g_{j+1}}
 \right)^{\frac{p}{2}}
 \\ & \lesssim \sum_{n=1}^\infty2^{-\frac{3pn}{2}}v_{2^{n+1}}^{-\frac{p}{2}}
  \sum_{k=0}^n2^{pk}v^{p/2}_{2^{k+1}}
  \left(\sum_{j\in I(k)} \widetilde{g_{j+1}}
 \right)^{\frac{p}{2}}
 \\ & \lesssim \sum_{k=0}^\infty 2^{pk}v^{p/2}_{2^{k+1}}
  \left(\sum_{j\in I(k)} \widetilde{g_{j+1}}
 \right)^{\frac{p}{2}} \sum_{n=k}^\infty2^{-\frac{3pn}{2}}v_{2^{n+1}}^{-\frac{p}{2}}
 \\ & \lesssim \sum_{k=0}^\infty 2^{-\frac{pk}{2}}
  \left(\sum_{j\in I(k)} \widetilde{g_{j+1}}
 \right)^{\frac{p}{2}} \lesssim \|g-g(0)\|^p_{\B(2,p)}.
 \end{split}\end{equation}
To complete the estimation, we turn to $J_3$, which satisfies
\begin{equation*}
 J_3 \asymp \sum_{n=1}^\infty2^{-\frac{pn}{2}}v_{2^{n+1}}^{-\frac{p}{2}}\left(
  \sum_{k=n+1}^\infty 2^{-2k}
  \sum_{j\in I(k)} j^2\widetilde{g_{j+1}} v_{2j-1}
 \right)^{\frac{p}{2}}.
\end{equation*}

By Lemma~\ref{Lemma:weights-in-D-hat} and Lemma~\ref{le:tech2}, then
$J_3$ is controlled by
 \begin{equation*}\begin{split}
&\sum_{n=1}^\infty2^{-\frac{pn}{2}}v_{2^{n+1}}^{-\frac{p}{2}}\left(
  \sum_{k=n+1}^\infty v_{2^{k+1}}
  \sum_{j\in I(k)} \widetilde{g_{j+1}}
 \right)^{\frac{p}{2}}
 \\ & \lesssim \sum_{n=1}^\infty2^{-\frac{pn}{2}}v_{2^{n+1}}^{-\frac{p}{2}}
  \sum_{k=n+1}^\infty v^{p/2}_{2^{k+1}}
  \left(\sum_{j\in I(k)} \widetilde{g_{j+1}}
 \right)^{\frac{p}{2}}
 \\ & = \sum_{k=2}^\infty v^{p/2}_{2^{k+1}}
  \left(\sum_{j\in I(k)} \widetilde{g_{j+1}}
 \right)^{\frac{p}{2}} \sum_{n=1}^{k-1}2^{-\frac{pn}{2}}v_{2^{n+1}}^{-\frac{p}{2}}
 \\ & \lesssim \sum_{k=2}^\infty 2^{-\frac{kp}{2}}
  \left(\sum_{j\in I(k)} \widetilde{g_{j+1}}
 \right)^{\frac{p}{2}}\lesssim \|g-g(0)\|^p_{\B(2,p)}.
 \end{split}\end{equation*}
  This together with \eqref{sp2}, \eqref{sp2n}, \eqref{sp3n} and \eqref{sp4} gives \eqref{sp3}.
\end{proof}

\subsection{Proof of Theorem~\ref{th:mainhg}. $\mathbf{(i)\Rightarrow(ii)}$. }

In order to show the remaining part of the proof of
Theorem~\ref{th:mainhg} we will employ \cite[Theorem~$1.28$]{Zhu}.

\begin{proposition}\label{pr:necpge1}
Let $g\in H(\D)$ and $v\in \DD$ satisfying \eqref{eq:minftyDv} and \eqref{eq:h1}.
If $1\le p<\infty$ and $\hg\in \mathcal{S}_p(\Dv)$, then $g\in\B(2,p)$
and $$\|g-g(0)\|_{\B(2,p)}\lesssim \| \hg\|_{S_p(\Dv)}.$$
\end{proposition}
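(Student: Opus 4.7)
The plan is to apply the one-sided Horn--Weyl type inequality \cite[Theorem~$1.28$]{Zhu}: for any compact operator $T$ on a separable Hilbert space, any two orthonormal sequences $\{e_n\}$, $\{h_n\}$, and any $1\le p<\infty$,
$$\sum_n |\langle Te_n,h_n\rangle|^p\le\|T\|_{S_p}^p.$$
I would apply this to $T=\hg$ after choosing the $\{e_n\}$, $\{h_n\}$ adapted to the dyadic block structure of the Maclaurin series of $g$ so that the left-hand side is comparable to $\|g-g(0)\|_{\B(2,p)}^p$ from below.

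For the first sequence I take the dyadic-block orthonormal basis $\{e_n\}_{n\ge 0}$ of $\Dv$ from \eqref{eq:en}, whose normalization by Lemma~\ref{Lemma:weights-in-D-hat}(v) satisfies $c_n^2=\sum_{k+1\in I(n)}2k^2 v_{2k-1}\asymp 2^{3n}v_{2^{n+1}}$. For the second sequence I set, for each $n\ge 1$ with a non-trivial dyadic piece of $g$,
$$P_n(z)=\sum_{k+1\in I(n)}\widehat{g}(k+1)z^k,\qquad h_n=\frac{P_n}{\|P_n\|_{\Dv}},$$
and fill the remaining indices by arbitrary orthonormal vectors (the contribution of such indices on both sides of Zhu's inequality will vanish). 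Since the $P_n$ live in pairwise disjoint Maclaurin blocks, $\{h_n\}$ is orthonormal in $\Dv$, and $\|P_n\|_{\Dv}^2\asymp v_{2^{n+1}}\sum_{k+1\in I(n)}\widetilde{g_{k+1}}$ again by Lemma~\ref{Lemma:weights-in-D-hat}(v).

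The key step is the lower bound
$$|\langle \hg e_n,h_n\rangle_{\Dv}| \gtrsim 2^{-n/2}\Bigl(\sum_{k+1\in I(n)}\widetilde{g_{k+1}}\Bigr)^{1/2}.$$
Indeed, since $\hg(z^k)(z)=\sum_{j\ge 0}\frac{(j+1)\widehat{g}(j+1)}{j+k+1}z^j$, pairing in the $\Dv$-inner product selects only indices with $j+1\in I(n)$:
$$\langle \hg e_n,h_n\rangle_{\Dv}=\frac{1}{c_n\|P_n\|_{\Dv}}\sum_{j+1\in I(n)}2j^2(j+1)v_{2j-1}|\widehat{g}(j+1)|^2 S_n(j),$$
where $S_n(j)=\sum_{k+1\in I(n)}(j+k+1)^{-1}$. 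For $j+1\in I(n)$ both $j$ and the running index $k$ are of order $2^n$, so $S_n(j)\asymp 1$, and $v_{2j-1}\asymp v_{2^{n+1}}$ by $v\in\DD$; plugging in the asymptotics of $c_n$ and $\|P_n\|_{\Dv}$ produces the claimed lower bound.

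Raising to the $p$-th power and summing in $n$, the dyadic characterization of $\B(2,p)$ recalled at the start of Section~3 yields
$$\sum_n|\langle \hg e_n,h_n\rangle_{\Dv}|^p\gtrsim\sum_n 2^{-np/2}\Bigl(\sum_{k+1\in I(n)}\widetilde{g_{k+1}}\Bigr)^{p/2}\asymp\|g-g(0)\|_{\B(2,p)}^p,$$
and Zhu's inequality bounds the left-hand side by $\|\hg\|_{S_p(\Dv)}^p$. The main technical obstacle is the choice of $\{h_n\}$: pairing $e_n$ with a generic test vector (such as $e_n$ itself) produces $\sum_{k+1\in I(n)}\widehat{g}(k+1)$, which is contaminated by cancellation and does not control the $H^2$-norm of the block. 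Aligning $h_n$ with the Maclaurin pattern of $g$ on the block, together with the doubling of $v$ and the localization of $(j+k+1)^{-1}$ to scale $2^{-n}$, is what removes the cancellation and gives the correct lower bound; a small bookkeeping verification is needed at the endpoints of $I(n)$ and to deal with the $n=0$ term involving $\widehat{g}(1)$, which is handled by the boundedness established in Proposition~\ref{p=infty}.
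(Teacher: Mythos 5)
Your proposal is correct and follows essentially the same route as the paper: both apply \cite[Theorem~$1.28$]{Zhu} to the dyadic-block orthonormal basis $\{e_n\}$ of \eqref{eq:en} paired with normalized test vectors supported on the $n$-th Maclaurin block and aligned with the coefficients of $g$, and both derive the lower bound $|\langle \hg e_n,h_n\rangle_{\Dv}|\gtrsim 2^{-n/2}\bigl(\sum_{k+1\in I(n)}\widetilde{g_{k+1}}\bigr)^{1/2}$ from the facts that $S_n(j)\asymp 1$ and $v_{2j-1}\asymp v_{2^{n+1}}$ within a block. The only difference is cosmetic: the paper takes block coefficients $(k+1)\widehat{g}(k+1)$ where you take $\widehat{g}(k+1)$, which is immaterial since $k+1\asymp 2^n$ throughout $I(n)$.
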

\begin{proof}
Let $\{e_n\}_{n=0}^\infty$ be the orthonormal basis defined in
\eqref{eq:en}. We write $M_0=|\hat{g}(1)|$ and
$$M_n=M_n(g,v)=\left(\sum_{k+1\in I(n)}k^2\widetilde{g_{k+1}} v_{2k-1}\right)^{1/2},\quad n\in\N.$$
Let us consider $N_g=\{n\in\N: M_n\neq 0 \}$ and
$$\sigma_n=M_n^{-1}\sum_{k+1\in I(n)}(k+1)\widehat{g}(k+1)z^k, \quad
n\in N_g.$$ The set $\{\sigma_n\}_{n\in N_g}$ is
   orthonormal.
Denote now $h_k := k^2 v_{2k-1} \widetilde{g_{k+1}}$. By
Lemma~\ref{Lemma:weights-in-D-hat},
 \begin{equation*}\begin{split}
&\left|\langle \hg e_n,\sigma_n \rangle_\Dv \right|
\\ & =\left(\sum_{k+1\in I(n)}k^2v_{2k-1}\right)^{-\frac12}
M_n^{-1}\sum_{k+1\in I(n)}h_k\left(\sum_{m+1 \in
I(n)}\frac{1}{m+k+1}\right)
\\ & \asymp \left(\sum_{k+1\in I(n)}k^2v_{2k-1}\right)^{-\frac12}\left(
\sum_{k+1\in I(n)}h_k \right)^{\frac12}
\\ & \asymp \left(2^{3n}v_{2^{n+1}}\right)^{-\frac12} \left(\sum_{k+1\in I(n)}h_k \right)^{\frac12} \asymp \left(2^{-n}\sum_{k+1\in
I(n)}\widetilde{g_{k+1}}\right)^{\frac12}, \quad \text{$n\in N_g$,
$n \ge 1$}.
\end{split}\end{equation*}
Hence, by \cite[Theorem~$1.28$]{Zhu}
\begin{equation*}\begin{split}
\infty & > \| \hg\|_{S_p(\Dv)}^p\ge\sum_{n\in N_g}\left|\langle \hg
e_n,\sigma_n\rangle_\Dv \right|^p \\ &\ge \sum_{n\in
N_g}\left(2^{-n}\sum_{k+1\in
I(n)}\widetilde{g_{k+1}}\right)^{\frac{p}{2}} \gtrsim \|
g-g(0)\|^p_{\B(2,p)}.
\end{split}\end{equation*}
\end{proof}

Finally, Theorem~\ref{th:mainhg} follows from
Propositions~\ref{p=infty}, \ref{p=2}, \ref{p>2suf}, \ref{pr:sufp<2}
and \ref{pr:necpge1}.

\subsection{Proof of  Corollary~\ref{co:hgbergman}.}
Since $\om\in\DD$, by \cite[Theorem~$4.2$]{PelRat} and
Lemma~\ref{Lemma:weights-in-D-hat},
\begin{equation}\begin{split}\label{eq:bergman}
\| f\|^2_{A^2_\om} &= |f(0)|^2\om(\D)+ \| f'\|^2_{A^2_\om*}
\\ & \asymp |f(0)|^2 + \int_\D |f'(z)|^2(1-|z|)\widehat{\om}(|z|)\,dA(z)
\\ &= \| f\|^2_{\Dv},\quad \text{ $f\in H(\D)$},
\end{split}\end{equation}
where $v(|z|)=(1-|z|)\om(|z|)$. Since $\om\in\DD$,  we have that
 $v\in\DD$ and $\vg(|z|)\asymp (1-|z|)^2\om(|z|)$,\,$z\in\D$. So,
using that $\widehat{\om}$ is a non-decreasing function
\begin{equation*}\begin{split}
& \sup_{0<r<1}\left(\int_r^1 \frac{\widehat{v}(s)}{(1-s)^2} \,ds\right)
\left(\int_0^r \frac{1}{\widehat{v}(s)} \,ds\right)
\\ & \asymp \sup_{0<r<1}\left(\int_r^1 \widehat{\om}(s) \,ds\right)
\left(\int_0^r \frac{1}{(1-s)^2\widehat{\om}(s)} \,ds\right)
<\infty.
\end{split}\end{equation*}
Moreover, by \eqref{M2condition}
\begin{equation*}
    \begin{split}
    &\sup_{0<
    r<1}\left(\int_{0}^r \frac{\widehat{v}(s)}{(1-s)^4}\,ds\right)^{\frac{1}{2}}
    \left(\int_{r}^1\frac{(1-s)^2}{\widehat{v}(s)}\,ds\right)^\frac{1}{2}
    \\ & \asymp \sup_{0<
    r<1}\left(\int_{0}^r \frac{\widehat{\om}(s)}{(1-s)^2}\,ds\right)^{\frac{1}{2}}
    \left(\int_{r}^1\frac{1}{\widehat{\om}(s)}\,ds\right)^\frac{1}{2}
    <\infty.
    \end{split}
    \end{equation*}
Therefore, $v$ satisfies both conditions,  \eqref{eq:minftyDv} and \eqref{eq:h1}. This together with \eqref{eq:bergman}
and Theorem~\ref{th:mainhg},  finishes the proof.


\end{document}